\newtheorem{theorem}{Theorem}[section]  
\newtheorem{lemma}[theorem]{Lemma}  
\newtheorem{proposition}[theorem]{Proposition}
\newtheorem{corollary}[theorem]{Corollary}
\newtheorem{remark}[theorem]{Remark}
\newtheoremstyle{indented}{3pt}{3pt}{}{}{\bfseries}{.}{.5em}{}
\theoremstyle{indented}
\newenvironment{customthmQuant}[1]{\innercustomthm}{\endinnercustomthm}
\newcommand{\End}{\mathrm{End}}
\newcommand{\limd}[1]{\displaystyle \lim_{#1}}
\newcommand{\intd}[1]{\displaystyle \int_{#1}}
\newcommand{\intdd}[2]{\displaystyle \int_{#1}^{#2}}
\newcommand{\fracd}[2]{\displaystyle \frac{#1}{#2}}
\newcommand{\bb}[1]{\mathbb{#1}}
\newcommand{\defeq}{\mathrel{\mathpalette{\vcenter{\hbox{$:$}}}=}}
\newcommand{\SU}{\mathrm{SU}}
\newcommand{\SO}{\mathrm{SO}}
\newcommand{\U}{\mathrm{U}}
\newcommand{\GL}{\mathrm{GL}}
\newcommand{\SL}{\mathrm{SL}}
\newcommand{\PU}{\mathrm{PU}}
\newcommand{\fl}{{\small \mathrm{flat}}}
\newcommand{\A}{{\mathcal{A}}}
\newcommand{\G}{{\mathcal{G}}}
\newcommand{\rheat}{\mathrm{Heat}}
\newcommand{\CS}{{\mathcal{CS}}}
\newcommand{\eps}{\epsilon}
\newcommand{\afA}{A}
\newcommand{\afa}{a}
\newcommand{\afu}{u}
\title{The Chern-Simons invariants for the double \\ of a compression body}
\author{David L. Duncan}
\date{}
\begin{document}

\maketitle

\begin{abstract}
Given a 3-manifold that can be written as the double of a compression body, we compute the Chern-Simons critical values for arbitrary compact connected structure groups. We also show that the moduli space of flat connections is connected when there are no reducibles.
\end{abstract}

\tableofcontents

\section{Introduction}

Let $G$ be a Lie group with Lie algebra $\frak{g}$. Given a principal $G$-bundle $P \rightarrow Y$ over a closed, oriented 3-manifold $Y$, one can define the Chern-Simons function 
$$\CS: \A(P) \rightarrow \bb{R} / \bb{Z},$$ 
where $\A(P)$ is the space of connections on $P$. The set of critical points of $\CS$ is the space of flat connections $\A_\fl(P) \subset \A(P)$, and the critical values are topological invariants of $Y$. In general, computing the critical values of $\CS$ is fairly difficult. Nevertheless, various techniques have been developed to handle certain classes of 3-manifolds; for example, see \cite{KK}, \cite{Auckly}, \cite{Rez}, \cite{Nishi}, \cite{NY}, \cite{DS3}, \cite{Wenergyidentity}. Most of these techniques require are specific to the choice of Lie group $G$, common examples being $\SU(2)$ and $\SL_{\bb{C}}(2)$.

In the present paper we compute the Chern-Simons critical values for any 3-manifold $Y$ that can be written as a \emph{double}
$$Y = \overline{H} \cup_{\partial H} H,$$ 
where $H$ is a compression body, $\overline{H}$ is a copy of $H$ with the opposite orientation, and the identity map on $\partial H$ is used to glue $\overline{H}$ and $H$; see Figure \ref{figure1}. For us, the term \emph{compression body} means that 

\begin{itemize}
\item $H$ is a compact, connected, oriented cobordism between surfaces $\Sigma_-, \Sigma_+$,
\item $H$ admits a Morse function $f: H \rightarrow \left[-1, 1 \right]$ with critical points of index 0 or 1, 
\item all critical values of $f$ are in the interior of $(-1, 1)$, and 
\item $f^{-1}(\pm 1) = \Sigma_\pm$. 
\end{itemize}
It follows that, up to homotopy, $H$ can be obtained from $\Sigma_+$ by attaching 2-handles. These topological assumptions imply that $\Sigma_+$ is connected; there is no bound on the number of components of $\Sigma_-$. (Note that not every 3-manifold can be realized as the double of a compression body; the Poincar\'{e} homology sphere is a simple counterexample.) 

Throughout this paper we work with an arbitrary compact, connected Lie group $G$, and we assume the bundle $P$ is obtained by doubling a bundle over $H$ in the obvious way.

\begin{figure}[h]
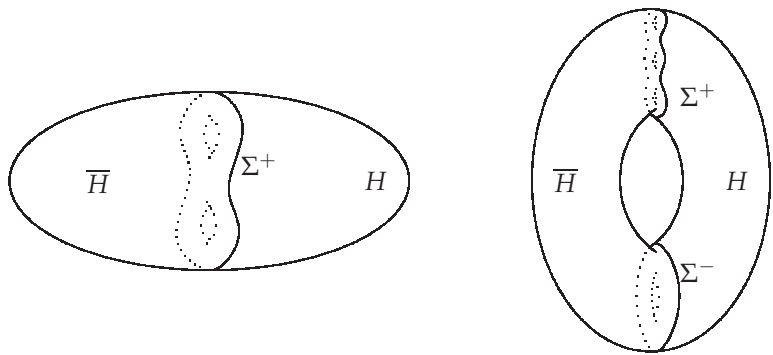
\label{figure1}

\vspace{2.1cm}

		\centerline{\xy 
		(-30,0)*={\xy
		/r.175pc/:,
	 (0,0)*\ellipse(36,16.06){-};
(0,16)*{}="A";
(0,-16)*{}="B";
"A";"B" **\crv{~**\dir{.} (-2,16)& (-8,9)& (-1,2)& (-9, -11)& (-2,-16)};  
"A";"B" **\crv{ (2,16.5)& (9,11)& (1,-2)& (8, -9)& (2,-16.5)};  
(0,-4.5)*{}="b1";
(0,-10.5)*{}="b2";
"b1";"b2" **\crv{~**\dir{.} (-3, -8.5)}; 
(-.5,-3.75)*{}="B1";
(-.5,-11.25)*{}="B2";
"B1";"B2" **\crv{~**\dir{.} (3, -8.5)}; 
(0.5,5)*{}="t1";
(0.5,11)*{}="t2";
"t1";"t2" **\crv{~**\dir{.} (-2.5, 9)}; 
(0,4.25)*{}="T1";
(0,11.75)*{}="T2";
"T1";"T2" **\crv{~**\dir{.} (3.5,9)}; 
(9,3)*{ \Sigma^+};
(-20,0)*{\overline{H}};
(30,0)*{H};
\endxy};
(25,0)*={\xy
		/r.15pc/:,
	 (0,0)*\ellipse(25,36){-};
	 (1,15)*{}="t1";
(1,-15)*{}="t2";
"t1";"t2" **\crv{(-9,8)& (-9,-8)}; 
 (-.2,14)*{}="t1";
(-.2,-14)*{}="t2";
"t1";"t2" **\crv{(9,8)& (9,-8)}; 
(1,-13.3)*{}="A1";
(1,-36)*{}="B1";
"A1";"B1" **\crv{(3,-13.3)& (9,-24) (3,-36)};  
"A1";"B1" **\crv{~**\dir{.}  (-1,-13.3)& (-5,-24)& (-1,-36)};  
(.7,-20.5)*{}="tl11";
(.7,-28.5)*{}="tl12";
"tl11";"tl12" **\crv{~**\dir{.} (2,-23)& (2,-26)}; 
(1,-19.5)*{}="tr11";
(1,-29.5)*{}="tr12";
"tr11";"tr12" **\crv{~**\dir{.} (0,-22)& (0,-26)}; 
(1,13.3)*{}="A3";
(1,36)*{}="B3";
"A3";"B3" **\crv{ (3,13.3)& (4,17.41)& (1,21.52)& (4,25.63)& (1,29.74)& (4,32.9)& (3,36)};  
"A3";"B3" **\crv{~**\dir{.}  (0,13.3)& (-2,17.41)& (0,21.52)& (-2,25.63)& (0,29.74)& (-2,32.9)& (0,36)};  
(.8,33.9)*{}="lt31";
(.8,32.3)*{}="lt32";
"lt31";"lt32" **\crv{~**\dir{.} (1.5,33.1)}; 
(1,34.3)*{}="lb31";
(1,31.8)*{}="lb32";
"lb31";"lb32" **\crv{~**\dir{.} (0,33.05)}; 
(.8,26)*{}="mt31";
(.8,24.08)*{}="mt32";
"mt31";"mt32" **\crv{~**\dir{.} (1.5,24.88)}; 
(1,26.8)*{}="mb31";
(1,23.6)*{}="mb32";
"mb31";"mb32" **\crv{~**\dir{.} (0,24.83)}; 
(.8,18.28)*{}="rt31";
(.8,16.46)*{}="rt32";
"rt31";"rt32" **\crv{~**\dir{.} (1.5,17.26)}; 
(1,19.18)*{}="rb31";
(1,15.98)*{}="rb32";
"rb31";"rb32" **\crv{~**\dir{.} (0,17.21)}; 
(10,18)*{ \Sigma^+};
(10,-19)*{ \Sigma^-};
(-18,0)*{\overline{H}};
(18,0)*{H};
	 \endxy};
	 \endxy}
	 \vspace{2.1cm}
	 	 \caption{\small Pictured above are two possibilities for $Y$. The first has $\Sigma_- = \emptyset$, making $H$ a handlebody. In the second figure, $\Sigma_-$ is connected.}
\end{figure}

Before stating the main result, we mention that the definition of $\CS$ requires a choice of normalization. When $G$ is simple this choice can be made in an essentially unique way. However, for arbitrary compact $G$ the situation is not as simple. It turns out that, in general, this normalization can be fixed by choosing a faithful unitary representation $\rho_0 : G\rightarrow \U(W)$, where $W$ is a finite-dimensional Hermitian vector space. One upshot of this approach is that certain computations reduce to the case where $G$ is a classical group; see Remark \ref{ghhhak}. It is convenient to phrase the main result in terms of a lift $\CS_\afa : \A(P) \rightarrow \bb{R}$ of the Chern-Simons function $\CS$; this lift can be defined by fixing a flat reference connection $\afa \in \A_\fl(P)$. See Section \ref{TheChernSimonsFunctional} for more details.

\begin{theorem}\label{finitechernsimons2} 
Let $G$ be a compact, connected Lie group. There is a positive integer $N_G$ such that if $H$, $P$, $Y$, $\rho_0$ are as above, then all critical values of $\CS_\afa: \A(P) \rightarrow \bb{R}$ are integer multiples of $1/ N_G$. 

The dependence of these critical values on the choice of $\rho_0$ is only up to an overall integer multiple. In particular, if the representation $\rho_0$ has image in $\SU(W) \subset \U(W)$, then all critical values are multiples of $2/N_G$. If $\rho_0$ is the complexification of a faithful orthogonal representation of $G $ (see Remark \ref{ghhhak}), then all critical values are multiples of $4/N_G$.
\end{theorem}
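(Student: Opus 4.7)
My plan is to use the natural cobordism $W = H \times [0,1]$, whose boundary after corner smoothing is diffeomorphic to $Y$ (with $H \times \{0\} \leftrightarrow \overline{H}$, $H \times \{1\} \leftrightarrow H$, and collar $\partial H \times I$ identified with a tubular neighborhood of $\partial H$ in $Y$). For any flat $A$ on $P \to Y$, I want to construct an extension $\tilde A$ on $\mathrm{pr}^* P_H \to W$ whose Chern--Weil integral $\int_W c_2(\rho_0(\tilde A))$ computes $\CS_\afa(A)$ modulo $\bb{Z}$.

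Set $a = A|_H$ and $\bar a = A|_{\overline{H}}$; both are flat on $P_H$ with matching boundary value on $\partial H$. Because $H$ is obtained from $\Sigma_+$ by attaching 2-handles only, the inclusion $\Sigma_+ \hookrightarrow H$ is $\pi_1$-surjective, so the holonomy representations of $a$ and $\bar a$ agree and there exists a gauge transformation $g$ on $P_H$ with $g^* a = \bar a$ and $g|_{\Sigma_+} \in \mathrm{Stab}(a|_{\Sigma_+})$. In the handlebody special case $\Sigma_- = \emptyset$, the fold $\pi_1(Y) \to \pi_1(H)$ is an isomorphism, so $A$ is gauge-equivalent to a pullback under the retraction $Y \to H$ that extends flatly across $W$ as $\mathrm{pr}^* a$, yielding $\CS_\afa(A) \in \bb{Z} \subset (1/N_G)\bb{Z}$. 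In general, the extra generators of $\pi_1(Y)$ coming from loops through each component $\Sigma_-^j$ of $\Sigma_-$ may have nontrivial holonomies $h_j = \mathrm{hol}_A(\gamma_j)$, which obstruct a flat extension to $W$; the obstruction can nevertheless be realized by modifying $\tilde A$ in a tubular neighborhood of each $\Sigma_-^j \times I$, inserting a controlled $dt$-component that reproduces $h_j$ as monodromy, so that $\tilde A$ is flat away from these tubes and has localized curvature inside them.

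Computing $\int_W c_2(\rho_0(\tilde A))$ then reduces to a sum of contributions from these localized tubes. Each contribution pairs $\rho_0^* c_2 \in H^4(BG;\bb{Z})$ with a relative 4-cycle built from $\Sigma_-^j$ and a path in $\mathrm{Stab}(a|_{\Sigma_-^j})$ from the identity to $h_j$; these pairings span a lattice $(1/N_G)\bb{Z}$ where $N_G$ is the positive integer determined by $(G, \rho_0)$ via the divisibility of $\rho_0^* c_2$ against the image of $\pi_3(G) \to H^4(BG;\bb{Z})$. Changing $\rho_0$ rescales the class $\rho_0^* c_2$ by an integer, giving the stated dependence; if $\rho_0$ factors through $\SU(W)$ the vanishing of $c_1(\rho_0)$ yields an extra factor of $2$ (via $c_2 = -p_1(\rho_0^{\bb{R}})/2$ on $\SU$-bundles), and if $\rho_0$ is moreover the complexification of an orthogonal representation, the evenness of the corresponding Pontryagin class contributes a further factor of $2$, producing the $4/N_G$ refinement. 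The main obstacle is the non-central $h_j$ case: when $h_j$ lies outside the center of the stabilizer of $a|_{\Sigma_-^j}$---particularly for reducible $a|_{\Sigma_-^j}$, where the stabilizer is positive-dimensional with intricate component structure inside a centralizer in $G$---the choice of interpolating path in the stabilizer affects the integral, and all such contributions must be organized uniformly in the flat connection and the compression body to land in the claimed lattice. This requires the structure theory of centralizers in compact connected Lie groups together with the universal normalization supplied by $\rho_0$.
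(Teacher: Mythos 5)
Your overall framing---fill $Y$ by a 4-manifold built from $H$, extend the flat connection, and read off $\CS_\afa$ from a relative Chern--Weil integral---is workable and runs parallel to what the paper does (the paper phrases the same 4-dimensional input through the mapping-torus identity (\ref{cw3}) rather than through $W = H\times[0,1]$), and your reduction to the boundary data via $\pi_1$-surjectivity of $\Sigma_+\hookrightarrow H$ is essentially the paper's Remark \ref{544}. But the proposal has a genuine gap exactly at the point the theorem is about: the case where the comparison gauge transformation (equivalently, your holonomies $h_j$ of loops crossing $\Sigma_-^j$) lies in a \emph{non-identity} component of the stabilizer of $a\vert_{\Sigma_-^j}$. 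Your construction of $\tilde A$ requires ``a path in $\mathrm{Stab}(a\vert_{\Sigma_-^j})$ from the identity to $h_j$,'' and no such path exists in precisely this case; if you instead interpolate through $G$, the curvature is no longer localized and flat on the tube boundary in the way your computation needs. You acknowledge this as ``the main obstacle'' and defer it to ``structure theory of centralizers,'' but that deferral is the missing idea. The paper's resolution is Wehrheim's trick: since the stabilizer (a centralizer in $G$) has at most $n_G$ components, some power $u^n$ with $n\le n_G$ lies in the identity component; one then glues $2n$ copies of $H$ cyclically into $Y^{(n)}$, computes the Chern--Simons value of the glued-up data in two ways, and divides by $n$, which is what produces the denominator and forces $N_G=\mathrm{lcm}\{1,\dots,n_G\}$.

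Relatedly, your identification of the lattice is not correct as stated: you attribute $1/N_G$ to divisibility of $\rho_0^*c_2$ against the image of $\pi_3(G)\to H^4(BG;\bb{Z})$, i.e.\ to a characteristic-class normalization depending on $(G,\rho_0)$. In fact every 4-dimensional quantity that appears (each $\kappa(P_u)$, and your tube contributions when a stabilizer path \emph{does} exist) is already an integer by Chern--Weil; the fractional part of the critical values comes solely from the component group of centralizers via the division-by-$n$ step above, so $N_G$ depends only on $G$, while $\rho_0$ enters only through the overall integer rescaling of the inner product (this is the content of Remark \ref{ghhhak}, which also supplies the factors $2$ and $4$ you sketch). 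As written, your $N_G$ would depend on $\rho_0$, which is inconsistent with the second half of the statement, and your argument gives no mechanism that produces denominators at all. To repair the proposal you would need (i) the connectedness/uniqueness results for flat connections on the compression body and its boundary surfaces (Proposition \ref{connlemma} and Remark \ref{544}, proved via the $\widetilde{G}$-valued holonomy), so that the comparison gauge transformation can be taken with controlled components, and (ii) an analogue of the cyclic $2n$-fold gluing to handle $h_j$ outside the identity component of its centralizer.
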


Following Wehrheim \cite{Wenergyidentity}, the integer $N_G$ appearing in Theorem \ref{finitechernsimons2} can be defined explicitly as follows. Consider the integer 
$$n_G\defeq \sup_{G' \leq G} \left\{ \vert \pi_0(C(G')) \vert \right\},$$
where the supremum is over all subgroups of $G$, and $C(G')$ denotes the centralizer in $G$. Then $n_G$ is finite since $G$ is compact. We define $N_G$ to be the least common multiple of $\left\{1, 2, \ldots, n_G \right\}$. Thus $N_G \geq 1$ is an integer depending only on $G$.

The definition of $N_G$ can often be refined if one has certain knowledge about $G$ or $P$. In particular, the proof will show that we can take $N_G = 1$ provided the following hypothesis holds. 
\vspace{-0.3cm}
\begin{adjustwidth}{0pt}{}
\begin{customthmQuant}{Hypothesis 1}\label{H} 
\emph{For each connected component $S\subset \Sigma_-$, the identity component of the gauge group acts trivially on $\A_\fl(P\vert_S)$.}
\end{customthmQuant}
\end{adjustwidth}

\medskip


 For example, \ref{H} holds trivially when $\Sigma_- $ is empty. When $\Sigma_-$ is non-empty, the hypothesis holds when $G = \SO(3)$ and the restriction of $P$ to each component of $\Sigma_-$ is nontrivial. More generally, this hypothesis is satisfied if $G = \U(r)$ or $\PU(r)$, and the integer $c_1(P)\left[S\right]$ is coprime to $r$ for all connected components $S \subset \Sigma_-$; see \cite{WWfloer}. On the other hand, when $\Sigma_-$ is nonempty and the bundle $P$ is trivial, then \ref{H} is never satisfied due to the trivial connection. That being said, it is perhaps worth mentioning that there are other hypotheses that allow one to replace $N_G$ by 1. For example, an argument by Wehrheim in \cite{Wenergyidentity} can be used in our proof below to show that, when $G = \SU(2)$, one can always replace $N_{\SU(2)}$ by 1 in the statement of Theorem \ref{finitechernsimons2}. We also point out that \ref{H} is not assumed in Theorem \ref{finitechernsimons2}; our primary motivation for introducing this hypothesis is to simplify the discussion at various times.

\medskip

Motivated by the techniques of Dostoglou-Salamon \cite[p.633]{DS3} and Wehrheim \cite{Wenergyidentity}, our strategy for proving Theorem \ref{finitechernsimons2} is to show that all flat connections are gauge equivalent to a connection in a certain canonical form. As a consequence, Theorem \ref{finitechernsimons2} can be viewed as a statement about the connected components of $\A_\fl(P)$. For example, we arrive at the following corollary; see Remark \ref{idremark}.

\begin{corollary}
Let $P \rightarrow Y$ be as in Theorem \ref{finitechernsimons2}. Assume \ref{H} is satisfied and either 
\begin{itemize}
\item $G = \U(r)$ or $\SU(r)$ and $\rho_0$ is the standard representation, or 
\item $G = \PU(r)$ and $\rho_0$ is the adjoint representation. 
\end{itemize}
If $a, a' \in \A_\fl(P)$, then there is a gauge transformation $u$ such that $u^*a$ and $a'$ lie in the same component of $\A_\fl(P)$. Moreover, two flat connections $a, a'$ lie in the same component of $\A_\fl(P)$ if and only if $\CS(a) = \CS(a')$. 
\end{corollary}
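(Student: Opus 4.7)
The plan is to combine the canonical form produced in the proof of Theorem~\ref{finitechernsimons2} with classical connectedness results for moduli spaces of flat connections on surfaces.

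First, I would dispose of the ``only if'' direction of the moreover clause. The map $\CS : \A(P) \to \bb{R}/\bb{Z}$ is continuous, and under Hypothesis~\ref{H} the remark after Theorem~\ref{finitechernsimons2} allows us to take $N_G = 1$. Together with the numerators $1$, $2$, $4$ appearing in the theorem for the three listed representations, this forces every critical value of $\CS$ to equal $0$ in $\bb{R}/\bb{Z}$, so $\CS$ is identically $0$ on $\A_\fl(P)$. The equation $\CS(a) = \CS(a')$ then holds automatically, the moreover clause reduces to showing that $\A_\fl(P)$ is path-connected, and the first sentence is the analogous assertion at the level of the moduli space $\A_\fl(P)/\G(P)$.

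To prove path-connectedness, I would unpack the canonical form used to prove Theorem~\ref{finitechernsimons2}: up to a gauge transformation, each flat connection $a$ on $Y$ is represented by a model determined by (i) a flat connection $\alpha$ on $P|_{\Sigma_-}$, and (ii) discrete holonomy data along the cocores of the 2-handles of $H$, with a standard extension across $H$ and doubling across $\Sigma_+$. For $G=\U(r)$ or $\SU(r)$ with the standard representation, and for $G=\PU(r)$ with the adjoint representation, the moduli space of flat connections on the closed surface $\Sigma_-$ with fixed underlying bundle is connected (Atiyah--Bott). Thus the continuous parameter $\alpha$ can be deformed from one model to another along a path. Hypothesis~\ref{H} is then used to absorb any change in the discrete parameter into a gauge transformation supported near the 2-handles: it asserts exactly that the identity component of the gauge group over $\Sigma_-$ acts trivially on the flat connections there, so no such transformation is obstructed. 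Concatenating yields a path in $\A_\fl(P)$ connecting the two models, and composing with the original gauge transformations establishes both assertions of the corollary.

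The main obstacle is verifying that this family of model connections really does form a continuous path of \emph{flat} connections on the full 3-manifold $Y$, rather than only on its pieces. One must check that the standard extension of the model across $H$ and the doubling across $\Sigma_+$ depend continuously on the parameters $(\alpha, \text{holonomy data})$, and that flatness is preserved throughout the deformation. This is where the explicit form of the canonical model and Hypothesis~\ref{H} are essential: the hypothesis prevents a discrete topological jump in the holonomy data, while the canonical form makes the extension well-defined and continuously parameterized.
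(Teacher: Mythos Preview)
Your reduction in the first paragraph is incorrect and leads you to attempt to prove a false statement. You argue that since $N_G=1$ under Hypothesis~\ref{H}, the circle-valued $\CS$ vanishes identically on flat connections, so the ``moreover'' clause becomes equivalent to $\A_\fl(P)$ being path-connected. But $\A_\fl(P)$ is \emph{not} path-connected in the cases at hand: already for $Y=S^3$ and $G=\SU(2)$ (where $H$ is a $3$-ball and Hypothesis~\ref{H} holds vacuously), applying a gauge transformation $u$ of nonzero degree to the trivial connection produces a flat connection with $\CS_{a_0}$-value $\kappa(P_u)\neq 0$, hence lying in a different component. The corollary must be read with the real-valued lift $\CS_{a_0}$ rather than the circle-valued $\CS$; the condition $\CS_{a_0}(a)=\CS_{a_0}(a')$ is genuinely nontrivial and is exactly what separates the components.

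The paper's argument (Remark~\ref{idremark}) is quite different from yours and uses the hypotheses on $G$ and $\rho_0$ at a point you have not identified. The proof of Theorem~\ref{finitechernsimons2} under Hypothesis~\ref{H} already shows that every flat $a$ satisfies $w^*a\in\mathcal{C}$ for some $w\in\mathcal{H}(P)$, with $\mathcal{C}$ path-connected; this gives the first sentence of the corollary immediately, with no further canonical-form or holonomy analysis needed. For the ``if'' direction of the moreover clause one then has $w\in\mathcal{H}$ with $w^*a$ and $a'$ in the same component, and $\CS_{a_0}(a)=\CS_{a_0}(a')$ forces $\kappa(P_w)=0$ via (\ref{cw3}). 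The reason those specific pairs $(G,\rho_0)$ are listed is that for them the assignment $w\mapsto\kappa(P_w)$ is injective on $\pi_0(\mathcal{H})$ (see \cite{FU}, \cite{DunPSU}), so $\kappa(P_w)=0$ forces $w\in\G_0$, whence $a$ and $w^*a$ already lie in the same component. Your proposal never invokes this injectivity; instead you locate the role of the specific groups in Atiyah--Bott connectedness of the flat moduli space over $\Sigma_-$, but that connectedness holds for \emph{every} compact connected $G$ (Proposition~\ref{connlemma}) and is not where the restriction on $G$ enters.
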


Our proof also identifies precisely when flat connections on $P$ exist. To state this, consider the commutator subgroup $\left[G, G \right] \subseteq G$. Then the quotient $P/ \left[G, G\right]$ is a torus bundle over $Y$. For example, if $G$ is semisimple then $P / \left[G, G \right] = Y$, and if $G = \U(r)$ then this quotient is the determinant $U(1)$-bundle. The next result follows from the proof of Proposition \ref{connlemma} below.

\begin{corollary}
Let $P \rightarrow Y$ be as in Theorem \ref{finitechernsimons2}. The space $\A_\fl(P)$ of flat connections is non-empty if and only if (i) the restriction $P / \left[G, G \right] \vert_{\partial H}$ is the trivial bundle, and (ii) for any spherical component $S^2 \subseteq \partial H$, the restriction $P\vert_{S^2}$ is the trivial bundle. 
\end{corollary}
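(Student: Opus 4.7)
The plan is to handle necessity and sufficiency separately. For necessity, assume $a\in\A_\fl(P)$ and restrict to $\partial H$. Condition (i) holds: the induced connection on the torus bundle $(P/[G,G])\vert_{\partial H}$ is flat, so its Chern class vanishes (it is the integral of a zero curvature form), and since torus bundles over a disjoint union of closed oriented surfaces are classified by their Chern class, this bundle is trivial. Condition (ii) holds: if $S^2\subseteq\partial H$, then $a\vert_{S^2}$ is a flat connection on a simply connected base, hence has trivial holonomy, hence trivializes $P\vert_{S^2}$.

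For sufficiency, assume (i) and (ii). I would construct a flat connection on $Q:=P\vert_H$ and then double it via the orientation-reversing identification of $\overline H$ with $H$. The key topological input is that, up to homotopy, $H$ is built from $\Sigma_-$ by attaching $1$-cells (plus a single $0$-cell if $\Sigma_-=\emptyset$), so $H^2(H;\pi_1(G))\cong H^2(\Sigma_-;\pi_1(G))$ and $\pi_1(H)$ is an iterated free product / HNN extension of $\pi_1(\Sigma_-)$ with additional free generators. Hence principal $G$-bundles on $H$ are determined up to isomorphism by their restriction to $\Sigma_-$, and any representation of $\pi_1(\Sigma_-)$ extends to $\pi_1(H)$ by sending the extra generators to the identity.

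The main step is therefore to realize $Q\vert_{\Sigma_-}$ as a flat bundle on each component. On any $S^2$ component the bundle is trivial by (ii) and one uses the trivial flat connection; on a component $S$ of genus $\geq 1$, condition (i) gives that $Q/[G,G]\vert_S$ is trivial, so the structure group of $Q\vert_S$ reduces to the commutator subgroup $[G,G]$. Because $[G,G]$ is compact semisimple, every principal $[G,G]$-bundle over $S$ admits a flat connection: its characteristic class lies in the finite group $\pi_1([G,G])$, and every element there is realized as the obstruction to lifting a representation $\pi_1(S)\to [G,G]$ to its universal cover. Extending the resulting holonomy representation trivially across the free generators from the $1$-handles yields a flat bundle on $H$ whose restriction to $\Sigma_-$ agrees with $Q\vert_{\Sigma_-}$, and by the classification above this flat bundle is isomorphic to $Q$. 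Doubling gives the desired flat connection on $P$. The main obstacle is the structure-group reduction and flat-bundle classification step, which is not quite the standard story when $G$ is not simple; a subtler bookkeeping point is that conditions (i) and (ii) on $\Sigma_+$ are never used directly in the construction but must follow automatically from those on $\Sigma_-$, which is the case because $[\Sigma_+]=\sum_{S\subseteq\Sigma_-}[S]$ in $H_2(H;\bb{Z})$.
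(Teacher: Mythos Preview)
Your argument is correct and arrives at the same endpoint as the paper, which simply refers back to the analysis in the proof of Proposition~\ref{connlemma}. Both routes rest on the same nontrivial input (every element of $\pi_1$ of a compact semisimple group is realized as the obstruction class of a surface-group representation, as in \cite{AMM}, \cite{RSW}, \cite{HL}), and both use the homotopy equivalence $H\simeq(\bigvee_i\Sigma_i)\vee(\bigvee S^1)$ to pass from surfaces to the compression body. The packaging differs: the paper works directly with the decomposition $\widetilde G=G_1\times\cdots\times G_k\times\bb{R}^l$ and reads both necessity and sufficiency off from solvability of the commutator equation $\prod_j[A_j,B_j]=\delta$ in each factor (the $\bb{R}^l$-component of $\delta$ must vanish, which is exactly condition~(i); the genus-zero case gives condition~(ii)); you instead treat necessity by a direct Chern--Weil and holonomy argument, and treat sufficiency by first reducing the structure group to $[G,G]$ and then invoking the semisimple case. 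Your necessity argument is a touch more elementary, and your observation that the conditions on $\Sigma_+$ are redundant (following from those on $\Sigma_-$ via $[\Sigma_+]=\sum_S[S]$ in $H_2(H)$) is a nice refinement not made explicit in the paper; the paper's route is more uniform in that a single moduli-space computation yields both directions at once.
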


The author's primary interest in Theorem \ref{finitechernsimons2} is due to its implications for the instanton energy values on certain non-compact 4-manifolds; see \cite{DunComp}. These 4-manifolds are those of the form $\bb{R} \times H^\infty$, where 
\begin{equation}\label{hdecomp}
H^\infty \defeq H \cup_{\partial H} \left(\left[0, \infty\right) \times \partial H \right)
\end{equation}
is obtained from a Riemannian 3-manifold $H$ by attaching a cylindrical end on its boundary. Given a principal $G$-bundle $P \rightarrow H$, define $P^\infty \rightarrow H^\infty$ similarly. Then the ``manifold at infinity'' of $\bb{R} \times H^\infty$ is the double of $H$ (see Section \ref{TheEnergyOfInstantons}).

\begin{corollary}\label{cor}
Suppose $G$ is a compact, connected Lie group and $H$ is a compact, oriented 3-manifold with boundary. Let $A$ be any finite-energy instanton on $\bb{R} \times P^\infty \rightarrow \bb{R} \times H^\infty$, with the instanton equation defined using the product metric. Then there is a flat connection $a_\flat$ on $\overline{H} \cup_{\partial H} H$ such that the energy of $A$ is $\CS_\afa(a_\flat)$. 
\end{corollary}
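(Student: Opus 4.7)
The plan is to reduce the statement to a Chern--Weil/Stokes computation on a large compact truncation of the 4-manifold $X := \mathbb{R}\times H^\infty$, exploiting the fact that the boundary of this truncation is diffeomorphic to the double $Y$. For $T>0$ set $H^T := H \cup_{\partial H} \bigl([0,T]\times\partial H\bigr)$ and $X_T := [-T,T]\times H^T$. Then $X_T$ is compact, and (after smoothing corners) $\partial X_T$ consists of a copy of $H$ at $t=T$, a copy of $\overline{H}$ at $t=-T$, and an interpolating collar $[-T,T]\times\{T\}\times\partial H$ joining them along $\partial H$; collapsing this collar identifies $\partial X_T$ with $Y$, and $P^\infty|_{\partial X_T}$ with the doubled bundle $P\to Y$.

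First I would establish the asymptotic behavior of $A$. Finite energy forces $\int_{X\setminus X_T}|F_A|^2\to 0$ as $T \to \infty$, and by standard Uhlenbeck/energy-quantization analysis for instantons on cylindrical ends---in the spirit of \cite{Wenergyidentity,DS3}---one can arrange, after gauge transformations, that on each end $A$ is $C^\infty_{\mathrm{loc}}$-close to the pullback of a flat connection: flat connections on the slices $\{\pm T\}\times H^\infty$ (themselves, by finite energy, asymptotic to flat connections on $\partial H$ down the inner cylindrical end of $H^\infty$), and a translation-invariant flat connection on the outer face $\mathbb{R}\times\{T\}\times\partial H$. These pieces must agree on their pairwise overlaps at the corners, and so assemble into a single flat connection $a_\flat\in\A_\fl(P)$ on $Y$.

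Next I would apply Chern--Weil on $X_T$. Up to the normalization fixed by $\rho_0$, the energy $E(A|_{X_T})$ equals $\int_{X_T}\mathrm{tr}(F_A\wedge F_A)$; by Stokes' theorem this is the Chern--Simons 3-form integrated over $\partial X_T$. Comparing with the definition of the lift $\CS_\afa$ as a path integral from $\afa$, one obtains
\begin{equation*}
E(A|_{X_T}) \;=\; \CS_\afa\bigl(A|_{\partial X_T}\bigr) + \varepsilon(T),
\end{equation*}
where $\varepsilon(T)\to 0$ by the decay estimates above. Letting $T\to\infty$, the left-hand side tends to $E(A)$, the boundary restriction converges modulo gauge to $a_\flat$, and continuity of $\CS_\afa$ on the affine space $\A(P)$ yields $E(A)=\CS_\afa(a_\flat)$.

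The principal obstacle is the asymptotic analysis at the corners of $X_T$ where two ends of $X$ meet: one must choose gauges consistently across these corners so that the asymptotic flat connections on the three faces of $\partial X_T$ agree on their common intersections and assemble into a single connection on $Y$, rather than a triple with possible mismatches along the seams. Once this compatibility is established, the remainder is essentially standard Chern--Weil bookkeeping combined with a finite-energy tail estimate.
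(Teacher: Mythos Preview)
Your outline has the right Chern--Weil/Stokes skeleton, but the two places you flag as ``standard'' are exactly where the argument is incomplete, and the paper's proof is organized precisely to avoid them.

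First, your asymptotic claim is too strong. You assert that on each of the three faces of $\partial X_T$ the connection $A$ becomes $C^\infty_{\mathrm{loc}}$-close to a flat limit. But two of those faces, $\{\pm T\}\times H^T$, sit inside ends whose cross-section $H^\infty$ is \emph{noncompact}, so the usual cylindrical-end convergence theorems do not apply directly; and even on a genuine cylinder $[T,\infty)\times Y$ with $Y$ closed, a finite-energy instanton need not converge to a single flat limit without a nondegeneracy or Morse--Bott hypothesis on the limiting flat connection. What finite energy \emph{does} give cheaply is only that $\|F_{a(\tau)}\|_{L^2}\to 0$ along some sequence $\tau_i\to\infty$. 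The paper uses exactly this weaker fact together with Uhlenbeck compactness to produce, after gauge, a subsequential flat limit $a_\infty$; the integer-valued jump $\CS_{a_0}(u_i^*a_i)-\CS_{a_0}(a_i)=\kappa(P_{u_i})$ is then eventually constant, which is what pins down a single $a_\flat$. No actual convergence of $A$ at infinity is ever asserted.

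Second, the corner-matching problem you identify is real, and the paper sidesteps it by replacing your rectangular exhaustion with a ``polar'' one: it projects $\bb{R}\times H^\infty$ to the upper half-plane and takes preimages of semicircles of radius $\tau$, each of which is already a copy of the double $Y_1\cong Y$ with no corners and no seams to match. After putting $A$ in a gauge with $\iota_\nu A=0$ along $\bb{R}\times\partial H$, the pullback $\Pi^*A$ decomposes as $a(\tau)+p(\tau)\,d\tau$ with $a(\tau)\in\A^1(P)$, and the energy becomes $\lim_{\tau\to\infty}\CS_{a_0}(a(\tau))$ on the nose, with $a_0$ determined by $A\vert_{Y_0}$ (the symmetry argument in the paper's Remark then shows $\CS_{a_0}$ agrees with $\CS_{a_1}$ for any flat symmetric $a_1$). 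So the single geometric idea---semicircular rather than rectangular level sets---simultaneously eliminates your corner obstacle and reduces the analytic input to a sequence-plus-Uhlenbeck argument rather than a genuine convergence theorem.
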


Note that the assumptions on $G$ and $H$ are very general. Corollary \ref{cor} is proved in Section \ref{TheEnergyOfInstantons} using an extension of a standard argument; see \cite{Tau2}, \cite{DS3}, \cite{S}, \cite{Wenergyidentity}, \cite{Wlag}, \cite{N}. See also \cite{Y}, \cite{Etesi} for similar results on instanton energies and characteristic numbers for non-compact manifolds.

\medskip

\noindent {\bfseries Acknowledgments:} The author would like to thank Chris Woodward, Sai Kee Yeung, Sushmita Venugopalan, Dan Freed, and Tom Parker for valuable discussions, comments and suggestions. This work was partially supported by NSF Grant DMS 0739208.

\section{Background}

Given a vector bundle $E \rightarrow X$, we will write $\Omega^\bullet (X, E) \defeq \oplus_k \Omega^k(X, E)$ for the space of differential forms on $X$ with values in $E$. We use the wedge product given by $\mu \wedge \nu = \mu \otimes \nu - \nu \otimes \mu$ for real-valued 1-forms $\mu, \nu$. 

Let $G$ be a compact Lie group, and $\rho_0 : G \rightarrow \U(W)$ the faithful unitary representation from the introduction. Then define a bilinear form $\langle \cdot, \cdot \rangle$ on the Lie algebra $\frak{g}$ by setting
\begin{equation}\label{defofinnerproduct}
\langle \mu, \nu \rangle \defeq  - \frac{1}{2\pi^2}\mathrm{Tr}((\rho_0)_* \mu \cdot (\rho_0)_* \nu), \indent \forall \mu, \nu \in \frak{g},
\end{equation}
where the trace is the one on ${\frak{u}}(W)$. (The normalizing factor $1/2\pi^2$ is chosen so that the quantities (\ref{cw2}) and (\ref{cw3}) below are integers. If $\rho_0$ has image in $\SU(W)$ then the more familiar $1/4\pi^2$ can be used.) Since we have assumed $\rho_0$ is faithful, it follows that $\langle \cdot , \cdot \rangle$ is non-degenerate, and so this defines an $\mathrm{Ad}$-invariant inner product on $\frak{g}$.

Suppose $\pi: P \rightarrow X$ is a principal $G$-bundle over a smooth $n$-manifold $X$; we assume $G$ acts on $P$ on the right. Given a right action $\rho: G \rightarrow \mathrm{Diff}(F)$ of $G$ on a manifold $F$ we will denote the associated bundle by $P \times_G F \defeq (P \times F) / G$. If $F = V$ is a vector space and $G \rightarrow \mathrm{Diff}(V)$ has image in $\GL(V) \subset \mathrm{Diff}(V)$, then $P \times_G V$ is a vector bundle and we will write $P(V) \defeq P \times_G V$. Pullback by $\pi$ induces an injection
$$\pi^*: \Omega^\bullet(X, P(V)) \hookrightarrow \Omega^\bullet(P, P \times V)$$
with image the space of forms that are equivariant and horizontal. 

We will write $P(\frak{g})$ for the \emph{adjoint bundle} associated to the adjoint representation  $G \rightarrow \GL(\frak{g})$. The Lie bracket $\left[ \cdot, \cdot \right]$ on $\frak{g}$ is Ad-invariant, and so this combines with the wedge to define a bilinear map $\mu \otimes \nu \mapsto \left[ \mu \wedge \nu \right]$ on $\Omega^\bullet(X, P(\frak{g}))$, endowing $\Omega^\bullet(X, P(\frak{g}))$ with the structure of a graded algebra. Similarly, the $\mathrm{Ad}$-invariance of the inner product $\langle \cdot, \cdot \rangle$ implies that it induces a fiber-wise inner product on the vector bundle $P(\frak{g})$. This combines with the wedge to give a graded bilinear map
$$\Omega^k(X, P(\frak{g})) \otimes \Omega^l(X, P(\frak{g})) \longrightarrow \Omega^{k+l}(X), \indent \mu \otimes \nu \longmapsto \langle \mu \wedge \nu \rangle.$$

	\subsection{Gauge theory}		

We denote by ${\mathcal{A}}(P) $ the set of all connections on $P$. By definition, $\A(P)$ consists of the elements of $\Omega^1(P, P \times \frak{g})$ that are both $G$-equivariant and vertical. It follows that $\A(P)$ is an affine space modeled on $\pi^* \Omega^1(X, P(\frak{g})) \cong \Omega^1(X, P(\frak{g}))$. We will write $\A^{1}(P)$ for the completion of $\A(P)$ with respect to the $H^1$-Sobolev norm; we will always assume $\A^1(P)$ is equipped with the $H^1$-topology. The space $\A^1(P)$ is independent of choices when $X$ is compact; when $X$ is non-compact the $H^1$-norm depends on the choice of a smooth reference connection at infinity.

Given any representation $\rho: G \rightarrow \GL(V)$, each connection $A \in {\A}(P)$ determines a covariant derivative 
$$d_{A, \rho}: \Omega^\bullet(X, P(V))  \longrightarrow  \Omega^{\bullet + 1} (X, P(V)), \indent \mu  \longmapsto (\pi^*)^{-1}\left( d \left( \pi^*\mu\right)+ \rho_*(A)\wedge \pi^*\mu\right),$$
where $d$ is the trivial connection on $P \times V$. When considering the adjoint representation, we will write $d_{\afA} \defeq d_{\afA, \mathrm{Ad}}$. The \emph{curvature endomorphism} $\mathrm{curv}(d_{A, \rho}) \in \Omega^2(X, \End(P(V)))$ is defined by the relation
			$$d_{A, \rho} \circ d_{A, \rho} \mu = \mathrm{curv}(d_{A, \rho}) \wedge \mu$$ 
			for all $\mu \in \Omega^\bullet(X, P(V))$. We define the \emph{curvature (2-form)} of $A$ by
			$$F_{A} = (\pi^*)^{-1}\left(d A +   \frac{1}{2} \left[A \wedge A \right]\right) \in \Omega^2(X, P(\frak{g})).$$
			The curvature 2-form $F_A$ recovers the curvature endomorphism $\mathrm{curv}(d_{A, \rho})$ in any representation $\rho$ in the sense that 
			\begin{equation}\label{curveq}
			\rho_* F_A = \mathrm{curv}(d_{A, \rho}).
			\end{equation}
			Taking $\rho=\mathrm{Ad}$, we therefore have $			\mathrm{curv}(d_A) \wedge \mu = \left[ F_A\wedge \mu \right]$ for all $\mu \in \Omega^\bullet(X, P(\frak{g}))$. Given any $A \in \A(P)$, the covariant derivative and curvature satisfy 
			$$d_{A+ \mu} = d_A + \left[ \mu \wedge \cdot \right], \indent F_{A + \mu}  =  F_A + d_A \mu + \frac{1}{2} \left[\mu \wedge \mu \right],$$
			for all $\mu \in\Omega^1(X, P(\frak{g}))$. We also have the Bianchi identity $d_A F_A = 0$. A connection $A$ is \emph{flat} if $F_A = 0$, and we denote the set of all smooth (resp. $H^1$) flat connections on $P$ by $\A_{\fl}(P)$ (resp. $\A_\fl^1(P)$).

\medskip

Suppose $X$ is a closed, oriented 4-manifold. Then associated to the fixed representation $\rho_0: G \rightarrow \U(W)$ from the introduction, we obtain a complex vector bundle $P(W)$ equipped with a Hermitian inner product. In particular, this has well-defined Chern classes $c_i \defeq c_i(P(W)) \in H^{2i}(X, \bb{Z})$.  The usual Chern-Weil formula says
$$		\kappa(P) = \kappa(P; \rho_0) \defeq  \left(c_1^2 - 2c_2  \right)\left[X\right] = - \frac{1}{4\pi^2} \intdd{X} \: \mathrm{Tr}\left( \mathrm{curv}(d_{A, \rho_0}) \wedge \mathrm{curv}(d_{A, \rho_0})   \right)  \in \bb{Z},$$
for any connection $A \in \A(P)$; the Bianchi identity shows this is independent of the choice of $A$. Here $\mathrm{Tr}(\mu \wedge \nu)$ is obtained by combining the wedge with the trace on $\frak{u}(W)$. Then equations (\ref{defofinnerproduct}) and (\ref{curveq}) show
		\begin{equation}\label{cw2}
		\kappa(P) = \frac{1}{2} \intdd{X} \: \langle F_A \wedge F_A \rangle.
		\end{equation}
		
\begin{remark}
		This characteristic number can be equivalently defined as follows. Let $B\U(W)$ be the classifying space for the unitary group, and let $\kappa \in H^4(B\U(W), \bb{Z})$ be given by the square of the first Chern class minus two times the second Chern class. Then $\kappa(P) \in H^4(X, \bb{Z}) \cong \bb{Z}$ is obtained by pulling back $\kappa$ under the map $X \rightarrow BG \rightarrow B\U(W)$; here the first arrow is the classifying map for $P$, and the second is induced by the representation $\rho_0: G \rightarrow \U(W)$. 
\end{remark}		
		
		 It follows immediately from the definition that $\kappa(P)$ is even if the mod-2 reduction of $c_1$ vanishes. Now suppose $\rho_0$ is obtained by complexifying a (real) orthogonal representation $ G \rightarrow \mathrm{O}(V)$. Then $P(W) = P(V)_{\bb{C}}$ is the complexification of the real vector bundle $P(V)$ and so $c_1 = 0$ vanishes. If, in addition, $X = S^1 \times Y$ is a product, then a characteristic class argument shows that $c_2$ is even (e.g., see \cite[Section 4.3]{DunPSU}), and so $\kappa(P)$ is a multiple of 4. 
		 
		 For example, consider the case where $G = \SO(r)$ with $r \geq 2$, and $\rho_0= \mathrm{Ad}_{\bb{C}}$ is the complexified adjoint representation. Then $\kappa(P) = 2(r-2)p_1(P(\bb{R}^r))\left[X \right]$, where $p_1(P(\bb{R}^4))$ is the Pontryagin class of the vector bundle associated to the standard representation of $\SO(r)$. 
		 
		 As a second example, consider $G = \SU(r)$. Then the integers $\kappa$ coming from the complexified adjoint and standard representations are related by
		 $$\kappa(P ; \textrm{Ad}_{\bb{C}}) = 2r \: \kappa(P ; \textrm{standard}).$$

		A \emph{gauge transformation} on $P$ is a $G$-equivariant bundle map $ P \rightarrow P$ covering the identity. The set ${\G}(P)$ of gauge transformations on $P$ forms a group, called the \emph{gauge group}. One may equivalently view the gauge group as the set of $G$-equivariant maps $P \rightarrow G$. Here $G$ acts on itself by conjugation of the inverse, making it a \emph{right} action. A third equivalent way to view $\G(P)$ is as the space of sections of the bundle $P \times_G G \rightarrow X$, where $P \times_G G$ is formed using the same action of $G$ on itself.
		
		Denote by $\G_0 = \G_0(P)$ the connected component of the identity in $\G(P)$. We need to specify a topology on $\G(P)$ for the term ``connected component'' to be meaningful, and we do this by viewing $\G(P)$ as a subspace of the space of functions $P \rightarrow G$, equipped with the $H^2$-topology (however, any other H\"{o}lder or Sobolev topology would determine the same connected components). We denote by $\G^2(P)$ the completion of $\G(P)$ in the $H^2$-topology. 

The gauge group acts on $\Omega^\bullet(P, P \times \frak{g})$ and ${\A}(P) \subset \Omega^\bullet(P, P \times \frak{g})$ by pullback. When the dimension of $X$ is three or less, this action is smooth with the specified topologies \cite[Appendix A]{Wuc}. We note that the action of a gauge transformation $u$ on a connection $A$ can be expressed as 
		\begin{equation}\label{gaugeaction}
		u^* A= u^{-1}A u + u^{-1} du,
		\end{equation}
		where the concatenation on the right is matrix multiplication (e.g., defined via the matrix representation $\rho_0$) and $du$ is the linearization of $u: P \rightarrow G$. In dimensions three or less, Equation (\ref{gaugeaction}) combines with the Sobolev multiplication theorem to show that if $u$, $A$ and $u^* A$ are all of Sobolev class $H^1$, then $u$ is actually of Sobolev class $H^2$. 
		
		The group $\G(P)$ also acts on $\Omega^\bullet(X, P(\frak{g}))$ by the pointwise adjoint action $(\xi, u) \mapsto \mathrm{Ad}(u^{-1}) \xi$. In particular, the curvature of $A \in \A(P)$ transforms under $u\in \G(P)$ by
			$$F_{u^*A} = \mathrm{Ad}(u^{-1})F_A.$$

			We introduce a notation convention that is convenient when the dimension of the underlying space $X$ is relevant. If $\dim X = 4$, then we use $A, U$ for connections and gauge transformations; if $\dim X = 3$, then we use $a, u$ for connections and gauge transformations; if $\dim X = 2$, then we use $\alpha, \mu$ for connections and gauge transformations. For example, this provides an effective way to distinguish between a path of gauge transformations $\mu: I \rightarrow \G(P)$ on a surface $X$, and its associated gauge transformation $u \in \G(I \times P)$ on the 3-manifold $I \times X$ defined by $u \vert_{\left\{ t\right\} \times P} = \mu(t)$.

\subsection{The Chern-Simons functional}\label{TheChernSimonsFunctional}

Fix a closed, connected, oriented 3-manifold $Y$, as well as a principal $G$-bundle $P \rightarrow Y$. The space of connections admits a natural 1-form $\lambda \in \Omega^1(\A(P), \bb{R})$ defined at $a \in \A(P)$ by
$$\lambda_{a}: T_a \A(P)  \longrightarrow \bb{R}, \indent v \longmapsto \intd{Y} \: \langle v \wedge F_{a} \rangle.$$
The Bianchi identity shows that this is a closed 1-form. Since $\A(P)$ is contractible it follows that $\lambda$ is exact. Fixing a reference connection $\afa_0$, this exact 1-form can therefore be integrated along paths from $\afa_0$ to obtain a real-valued function $\CS_{a_{0} } : \A(P) \rightarrow \bb{R}$. One can compute that $\CS_{a_{0} }$ is given by the formula
			$$\CS_{a_{0} }(a) \defeq \int_{Y} \langle F_{a_0} \wedge v \rangle +  {\textstyle\frac{1}{2}} \langle d_{a_0} v \wedge v \rangle  + {\textstyle\frac{1}{6}} \langle \left[ v \wedge v \right] \wedge v \rangle,$$
			where we have set $v \defeq  a - a_{0}  \in \Omega^1(Y, P(\frak{g}))$. We will typically choose $a_0$ to be flat, but this is not always convenient. In general, however, changing $a_0$ changes $\CS_{a_0}$ by a constant. Projecting $\CS_{\afa_0}$ to the circle $\bb{R} / \bb{Z}$, one obtains the Chern-Simons function $\CS : \A(P) \rightarrow \bb{R} / \bb{Z}$ from the introduction; we will refer to the lift $\CS_{\afa_0}$ as the \emph{Chern-Simons functional}. Moreover, $\CS_{a_0}$ has a smooth extension from the smooth connections $\A(P)$ to the $H^1$-completion $\A^1(P)$.

			Suppose $a, a' \in \A(P)$. Any path $a(\cdot) : \left[0, 1\right] \rightarrow \A(P)$ from $a$ to $a'$ can be interpreted as a connection $A$ on $\left[0, 1\right] \times P \rightarrow \left[0, 1 \right] \times Y$ by requiring that it restricts to $a(t)$ on $\left\{t \right\} \times Y$. It follows from the definitions that
			$$\begin{array}{rcl}
			\CS_{a_0}(a') - \CS_{a_{0} }(a) & = & \fracd{1}{2} \intd{I \times Y}\: \langle F_A \wedge F_A \rangle.
			\end{array}$$
		In the special case where $a' = \afu^*a$, with $u \in \G(P)$, the connection ${\afA}$ descends to a connection on the mapping torus 
		$$P_{\afu} \defeq I \times P / (0, u(q)) \sim (1, q),$$
		which is a bundle over $S^1 \times Y$. Then the above gives
		\begin{equation}\label{cw3}
		\CS_{a_0}(u^*a) - \CS_{a_0}(a) = \fracd{1}{2} \intd{S^1 \times Y}\: \langle F_A \wedge F_A \rangle =  \kappa(P_u) \in \bb{Z},
		\end{equation}
		where we used (\ref{cw2}) in the second equality. It follows that the value of this depends only on the path component of $u$ in $\G(P)$. Equation (\ref{cw3}) also shows that $\CS_{\afa_0}$ is invariant under the subgroup of gauge transformations $\afu$ with $\kappa(P_{\afu})  = 0$ (the ``degree zero'' gauge transformations), and that the circle-valued function $\CS : \A(P) \rightarrow \bb{R} / \bb{Z}$ is invariant under the full gauge group $\G(P)$.

		 \begin{remark}\label{ghhhak}
			The discussion following equation (\ref{cw2}) shows that if the mod-2 reduction of $c_1(P_u(W))$ vanishes, then (\ref{cw3}) is even. Similarly, if the fixed representation $\rho_0$ is the complexification of a real representation, then (\ref{cw3}) is a multiple of 4. 
		 \end{remark}

For completeness we show that the space of flat connections on $P$ is locally path-connected. This implies, for example, that the Chern-Simons critical values are always isolated since the moduli space $\A_\fl(P) / \G(P)$ is compact and $\CS_{\afa_0}$ is constant on the path components of $\A_\fl(P)$.

		\begin{proposition}\label{propconn}
		The space $\A^1_\fl(P)$ of flat connections is locally path-connected. In particular, the path components are the connected components. 
	\end{proposition}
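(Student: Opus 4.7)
The strategy is to combine the local Coulomb slice theorem with a Kuranishi-style finite-dimensional reduction, and then invoke {\L}ojasiewicz's theorem that finite-dimensional real-analytic subsets are locally path-connected. Fix $a \in \A^1_\fl(P)$. The standard slice theorem for the $\G^2$-action on $\A^1(P)$ in dimension three yields open neighborhoods $V$ of the identity in $\G^2(P)$ and $S$ of $0$ in the Coulomb slice $\ker d_a^* \subset \Omega^1(Y, P(\frak{g}))$ (with the stated $H^1$/$H^2$ topologies) such that the map $(u,v) \mapsto u^*(a+v)$ is an open surjection onto a neighborhood $U$ of $a$, with fibers given by the diagonal action of the compact isotropy group $\mathrm{Stab}(a) \subset \G^2(P)$. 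Since gauge transformations preserve flatness, $U \cap \A^1_\fl(P)$ is identified with the quotient of $V \times (S \cap \A^1_\fl(P))$ by this compact action. The factor $V$ is an open subset of a Hilbert Lie group, hence locally path-connected, and quotients by continuous compact group actions preserve local path-connectedness. It therefore suffices to show that the set
\[
\{v \in S : d_a v + \tfrac{1}{2}[v \wedge v] = 0\}
\]
is locally path-connected at $0$.

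\textbf{Kuranishi reduction.} Because $a$ is flat, $d_a^2 = 0$, so Hodge decomposition relative to $d_a$ gives $\Omega^k(Y, P(\frak{g})) = \mathcal{H}^k_a \oplus d_a \Omega^{k-1} \oplus d_a^* \Omega^{k+1}$, with finite-dimensional harmonic summands $\mathcal{H}^k_a$. Projecting the flatness equation onto the non-harmonic part of $\Omega^2$ gives an elliptic equation that, by the implicit function theorem, can be solved real-analytically for the non-harmonic component of $v$ in terms of the harmonic component $h \in \mathcal{H}^1_a$. This produces a real-analytic map $\phi : B \to S$ defined on a ball $B \subset \mathcal{H}^1_a$ such that the flat connections in $S$ correspond precisely to $\phi(h)$ for $h$ in the zero set of the finite-dimensional real-analytic \emph{Kuranishi map} $\kappa : B \to \mathcal{H}^2_a$ whose leading quadratic term is $h \mapsto \tfrac{1}{2}\pi_{\mathcal{H}^2_a}[h \wedge h]$. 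Since $\mathcal{H}^1_a$ and $\mathcal{H}^2_a$ are finite-dimensional, {\L}ojasiewicz's theorem implies that $\kappa^{-1}(0)$ is locally path-connected at $0$. Unwinding the slice and Kuranishi identifications proves that $U \cap \A^1_\fl(P)$ is locally path-connected at $a$, as desired.

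\textbf{Main obstacle.} The principal technical point is carrying out the implicit function step rigorously in the $H^1$-Sobolev setup while ensuring real-analyticity of the parametrization $\phi$; the polynomial structure of the curvature map makes this routine but requires care with Sobolev multiplication and the mapping properties of the Green's operator for the $d_a$-Laplacian. Secondarily, one must check that the isotropy group $\mathrm{Stab}(a)$ (a subgroup of $G$, hence compact) acts continuously on the slice model, but this is standard and causes no obstruction to passing local path-connectedness to the quotient.
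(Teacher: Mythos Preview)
Your approach is correct but takes a genuinely different route from the paper. The paper sidesteps Kuranishi models and analytic-set theory altogether: it invokes R\r{a}de's theorem that the Yang--Mills heat flow defines a continuous, gauge-equivariant retraction $\rheat$ from the set of connections with small $L^2$ curvature onto $\A^1_\fl(P)$. Given two flat connections $a_0,a_1$ that are $H^1$-close, the straight-line path $a(t)=a_0+t(a_1-a_0)$ has uniformly small curvature by a direct estimate using the Sobolev embedding $H^1\hookrightarrow L^4$ in dimension three, so $t\mapsto\rheat(a(t))$ is a continuous path in $\A^1_\fl(P)$ from $a_0$ to $a_1$. This is short and relies on a single black-box analytic input. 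Your argument, by contrast, yields more structural information---a local finite-dimensional real-analytic model for the flat locus---at the cost of assembling the slice theorem, the Banach-space implicit function theorem, and {\L}ojasiewicz's theorem on local path-connectedness of real-analytic varieties. Both are standard gauge-theoretic strategies; the paper's choice trades the Kuranishi picture for brevity and for avoiding exactly the Sobolev-multiplication and Green's-operator bookkeeping that you correctly flag as the main technical obstacle in your approach.
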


	\begin{proof}	 
		  R\r{a}de \cite{Rade} used the heat flow associated to the Yang-Mills equations to show that there is some $\eps_P> 0$ such that if $a \in \A^1(P)$ is a connection with $\Vert F_{a} \Vert_{L^2} \leq \eps_P$, then there is a nearby flat connection
				$$\rheat(a) \in \A_\fl^1(P).$$
				R\r{a}de shows that the map $a \mapsto \rheat(a)$ is continuous, gauge equivariant, and restricts to the identity on $\A_\fl^1(P)$.

				Let $a_0, a_1 \in \A_\fl^1(P)$. We want to show that if $a_0$ and $a_1$ are close enough (in $H^1$), then they are connected by a path in $\A_\fl^1(P)$. Consider the straight-line path $a(t) = a_0 + t(a_1 - a_0)$. Then 
				$$F_{a(t)} = t d_{a_0}(a_1 - a_0) + \frac{t^2}{2} \left[ a_1 -a_0 \wedge a_1 - a_0 \right],$$ 
				and so
		$$\Vert F_{a(t)} \Vert_{L^2}  \leq  \Vert d_{a_0} (a_1-a_0) \Vert_{L^2} + \Vert a_1 - a_0 \Vert^2_{L^4} \leq C \left( \Vert a_1 - a_0 \Vert_{H^1} + \Vert a_1 - a_0 \Vert_{H^1}^2\right),$$
		where we have used the Sobolev embedding $H^1 \hookrightarrow L^4$. Then $a(t)$ is in the realm of the R\r{a}de's heat flow map for all $t \in \left[0, 1 \right]$, provided $\Vert a_1 - a_0 \Vert_{H^1} < \min\left\{1, \epsilon_P / 2C \right\}$. When this is the case, $t \mapsto  \rheat(a(t)) \in \A_\fl^1(P)$ is a path from $a_0$ to $a_1$, as desired.

	\end{proof}

\section{Chern-Simons values and instantons}

We prove Theorem \ref{finitechernsimons2} and Corollary \ref{cor} in Sections \ref{Proof1} and \ref{TheEnergyOfInstantons}, respectively. We take a TQFT approach to the proof of Theorem \ref{finitechernsimons2} in the sense that we treat each connection on $Y = \overline{H} \cup_\partial H$ as a pair of connections on $H$ that agree on the boundary. This reduces the problem to a study of the flat connections on $H$ and $\partial H$, which is the content of Section \ref{comp}.

\subsection{The components of the gauge group and the space of flat connections}\label{comp}			
			
			In this section, we fix a principal $G$-bundle $P \rightarrow X$, where $X$ is a manifold with (possibly empty) boundary. The action of the gauge group is rarely free. To account for this, it is convenient to consider the \emph{based gauge group} $\G_p = \G_p(P)$ defined as the kernel of the map $\G(P) \rightarrow G$ given by evaluating $u: P \rightarrow G$ at some fixed point $p \in P$. If $X$ is connected, then $\G_p$ acts freely on $\A(P)$ (in general, the stabilizer in $\G(P)$ of a connection $A \in \A(P)$ can be identified with the image in $G$ of the evaluation map $u \to u(p)$). 
			
			Let $\widetilde{G} \rightarrow G$ be the universal cover. We will be interested in the subgroup ${\mathcal{H}} = {\mathcal{H}}(P)$ of gauge transformations $u: P \rightarrow G$ that lift to $G$-equivariant maps $\smash{\widetilde{u}: P \rightarrow \widetilde{G}}$, where the (right) action of $G$ on $\smash{\widetilde{G}}$ is induced by the conjugation action of $\smash{\widetilde{G}}$ on itself. 
			
			\begin{lemma}\label{44}
			The subgroup ${\mathcal{H}}$ is a union of connected components of $\G(P)$. In particular, ${\mathcal{H}}$ contains the identity component $\G_0$ of $\G(P)$.
			\end{lemma}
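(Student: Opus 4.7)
The plan is to show that membership in $\mathcal{H}$ depends only on the path component of a gauge transformation in $\G(P)$. Combined with local path-connectedness of $\G(P)$ in the $H^2$-topology (it is modeled on a Hilbert space), this implies $\mathcal{H}$ is a union of connected components. The inclusion $\G_0 \subseteq \mathcal{H}$ is then immediate once the identity gauge transformation $u \equiv e$ is placed in $\mathcal{H}$, for which the equivariant lift $\widetilde{u} \equiv \widetilde{e}$ is obvious.

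The main step is an application of path lifting for the covering $q: \widetilde{G} \to G$. Let $u_0 \in \mathcal{H}$, with equivariant lift $\widetilde{u}_0: P \to \widetilde{G}$, and suppose $s \mapsto u_s$ is a continuous path of gauge transformations starting at $u_0$. Viewing the path as a single continuous map $[0,1] \times P \to G$, the homotopy lifting property of $q$ furnishes a continuous family $\widetilde{u}_s: P \to \widetilde{G}$ of lifts with initial value $\widetilde{u}_0$. The crux of the argument is checking that each $\widetilde{u}_s$ is $G$-equivariant: fix $p \in P$, $g \in G$, and a lift $\widetilde{g} \in \widetilde{G}$ of $g$. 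The two paths $s \mapsto \widetilde{u}_s(pg)$ and $s \mapsto \widetilde{g}^{-1}\widetilde{u}_s(p)\widetilde{g}$ are both continuous lifts of $s \mapsto g^{-1}u_s(p)g$ in $G$ and agree at $s = 0$ by the equivariance of $\widetilde{u}_0$; uniqueness of path lifting forces them to coincide for all $s$. Hence $u_s \in \mathcal{H}$ throughout the path.

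A minor point worth keeping in mind is the translation between the $H^2$-topology on $\G(P)$ and the continuous topology used in covering-space theory. This is harmless here because Sobolev embedding in dimension at most four gives $H^2 \hookrightarrow C^0$, so an $H^2$-continuous path is automatically a continuous homotopy to which the argument above applies. An equivalent, more conceptual, phrasing I would keep in reserve is that $\mathcal{H}$ is precisely the set of sections of the bundle $P \times_G G$ that lift through the induced fiber-wise covering $P \times_G \widetilde{G} \to P \times_G G$, from which $\mathcal{H}$ is manifestly a homotopy-invariant subset of the section space. The only real obstacle is the equivariance check above; everything else is essentially formal covering-space theory.
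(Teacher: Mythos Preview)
Your argument is correct and is essentially the paper's own: both reduce to the homotopy lifting property of a covering (you lift through $\widetilde{G}\to G$ and check equivariance via uniqueness of lifts; the paper packages this as lifting sections through the associated-bundle covering $P\times_G \widetilde{G}\to P\times_G G$, precisely the reformulation you hold in reserve). One minor slip: $H^2\hookrightarrow C^0$ fails in dimension exactly four, so your Sobolev remark should say ``dimension at most three,'' which is all that is needed in this paper.
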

			
			\begin{proof}
			 Consider the aforementioned right action of $G$ on $\smash{\widetilde{G}}$. Use this action to define a bundle $\smash{P \times_G \widetilde{G}} \rightarrow X$, and consider the natural projection $\smash{P \times_G \widetilde{G} \rightarrow P \times_G G}$. Viewing a gauge transformation $u$ as a section of $P \times_G G \rightarrow X$, the defining condition of ${\mathcal{H}}$ is equivalent to the existence of a section $\smash{\widetilde{u}: X \rightarrow P \times_G \widetilde{G}}$ lifting $u$. It follows from the homotopy lifting property for the covering space $\smash{P \times_G \widetilde{G} \rightarrow P \times_G G}$ that if $u$ can be connected by a path to an element of ${\mathcal{H}}$, then $u \in {\mathcal{H}}$. 
			 \end{proof}
			
\begin{lemma}\label{2}
Suppose $G$ is compact and connected, and that $X$ has the homotopy type of a connected 2-dimensional CW complex. Then ${\mathcal{H}} \cap \G_p$ is connected, and the inclusion $\G_p \subseteq \G(P)$ induces a bijection $\pi_0(\G_p) \cong \pi_0(\G(P))$. Consequently, ${\mathcal{H}}$ is the identity component $\G_0$ of $\G(P)$.
\end{lemma}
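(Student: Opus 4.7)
The plan is to prove connectedness of ${\mathcal{H}} \cap \G_p$ by obstruction theory, then use it together with Lemma \ref{44} to establish both the bijection and the identification ${\mathcal{H}} = \G_0$.

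First, I identify ${\mathcal{H}} \cap \G_p$ with the space of sections $\tilde{u}: X \to P \times_G \widetilde{G}$ that send $x_0 = \pi(p)$ to $[p, \tilde{e}]$. Each $u \in {\mathcal{H}} \cap \G_p$ admits a unique $G$-equivariant lift $\tilde{u}: P \to \widetilde{G}$ normalized by $\tilde{u}(p) = \tilde{e}$ (uniqueness because any two lifts differ by the deck group $\pi_1(G) \subset Z(\widetilde{G})$, which is pinned down by the choice of value at $p$). Since the fiber $\widetilde{G}$ satisfies $\pi_i(\widetilde{G}) = 0$ for $i = 0, 1, 2$ (the first two because $\widetilde{G}$ is simply connected; the third because $\pi_2$ vanishes for any Lie group), and $\dim(X \times I) \leq 3$, standard obstruction theory applied to the pair $(X \times I,\, (X \times \partial I) \cup \{x_0\} \times I)$ shows that any two based sections are joined by a based homotopy. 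Thus ${\mathcal{H}} \cap \G_p$ is path-connected.

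For the bijection $\pi_0(\G_p) \cong \pi_0(\G(P))$, surjectivity follows from a direct local deformation: given $u \in \G(P)$, choose a path $g_t$ in $G$ from $u(p)$ to $e$ and a bump function $\phi$ supported in a trivializing neighborhood of $x_0$ with $\phi(x_0) = 1$, and modify $u$ inside this neighborhood to produce $u_t$ with $u_t(p) = g_t$ while $u_t = u$ outside $\mathrm{supp}(\phi)$. This gives a path in $\G(P)$ from $u$ to an element of $\G_p$. For injectivity, if $u_0, u_1 \in \G_p$ lie in the same component of $\G(P)$, then $u_1 u_0^{-1} \in \G_0 \cap \G_p \subseteq {\mathcal{H}} \cap \G_p$ (using $\G_0 \subseteq {\mathcal{H}}$ from Lemma \ref{44}), and the first paragraph supplies a path in $\G_p$ from $u_1 u_0^{-1}$ to $\mathrm{id}$; right-translating this path by $u_0$ connects $u_0$ and $u_1$ within $\G_p$.

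Finally, $\G_0 \subseteq {\mathcal{H}}$ is Lemma \ref{44}. Conversely, given $u \in {\mathcal{H}}$, the surjectivity just proved yields $u' \in \G_p$ in the same $\G(P)$-component as $u$; since ${\mathcal{H}}$ is a union of components of $\G(P)$ (Lemma \ref{44}), we have $u' \in {\mathcal{H}} \cap \G_p$, which by the first paragraph is connected to $\mathrm{id}$ inside $\G(P)$, giving $u' \in \G_0$ and therefore $u \in \G_0$. The hardest step is the obstruction-theory argument: it relies crucially on $\dim X \leq 2$ and on $\pi_2$ of any Lie group vanishing, and one should also verify that it transfers cleanly to the $H^2$-completion $\G^2(P)$, which it does via the Sobolev embedding $H^2 \hookrightarrow C^0$ in dimension $\leq 3$.
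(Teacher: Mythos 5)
Your argument is correct and reaches the lemma by a partly different route than the paper. For the key step, connectedness of ${\mathcal{H}} \cap \G_p$, the paper homotopes $u$ to the identity over a coordinate ball $B$ about $\pi(p)$, trivializes $P$ over $X - B$, and then null-homotopes $\widetilde{u}$, viewed as a map of pairs $(X-B,\partial B) \to (\widetilde{G}, e)$, using that $\widetilde{G}$ is $2$-connected; you instead run obstruction theory directly on based sections of $P \times_G \widetilde{G} \to X$, which uses the same two inputs ($2$-connectivity of $\widetilde{G}$ and $2$-dimensionality of $X$ up to homotopy) but avoids having to trivialize $P$ over the complement of a ball --- a genuine simplification, since triviality of $P\vert_{X-B}$ is the most delicate point of the paper's argument when $X$ is a compression body rather than a surface. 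Your treatment of $\pi_0(\G_p)\cong\pi_0(\G(P))$ matches the paper's for surjectivity (absorb $u(p)$ by a bump-function modification in a trivializing neighborhood of the basepoint, using connectedness of $G$), and you additionally spell out injectivity via the group structure on $\pi_0$ together with connectedness of ${\mathcal{H}}\cap\G_p$, a point the paper leaves implicit; the concluding identification ${\mathcal{H}}=\G_0$ is the same in both.

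One imprecision to repair: you justify the vanishing of the obstructions by ``$\dim(X\times I)\le 3$'' and later say the argument ``relies crucially on $\dim X\le 2$''. The hypothesis is only that $X$ has the homotopy type of a $2$-dimensional CW complex, and in the paper the lemma is applied with $X = H$ a compression body, a $3$-manifold, so the literal dimension count fails exactly where the lemma is needed. The fix is immediate: the obstructions to a based vertical homotopy lie in $H^{k+1}\bigl(X\times I, (X\times\partial I)\cup(\{x_0\}\times I); \pi_k(\widetilde{G})\bigr)\cong H^{k}(X, x_0; \pi_k(\widetilde{G}))$, which vanish for $k\le 2$ because the coefficients vanish and for $k\ge 3$ because $X$ is homotopy equivalent to a $2$-complex (alternatively, pull the bundle back along a homotopy equivalence from a $2$-complex before running the cell-by-cell argument). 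With that rewording your proof is complete.
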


\begin{proof}
First we show that ${\mathcal{H}} \cap \G_p$ is connected. For $u \in {\mathcal{H}}$, let $\widetilde{u}$ be a lift as above. Note that if $u \in \G_p$, then $\smash{\widetilde{u}(p) \in Z(\widetilde{G})}$ is in the center and so $\smash{\widetilde{u}(p)^{-1} \widetilde{u}}$ is another equivariant lift of $u$. In particular, by replacing $\widetilde{u}$ with $\smash{\widetilde{u}(p)^{-1} \widetilde{u}}$, we may assume $\widetilde{u}$ has been chosen so that $\widetilde{u}(p) = e \in\widetilde{G}$. Moreover, by homotoping $u$ we may assume that $u$ (hence $\widetilde{u}$) restricts to the identity on $\pi^{-1}(B)$, where $B \subset X$ is some open coordinate ball around $x = \pi(p)$. The topological assumptions imply that $B$ can be chosen so the complement $X - B$ deformation retracts to its 1-skeleton. Since $G$ is connected, the restriction $P \vert_{X - B} \rightarrow X-B$ is trivializable. By equivariance, we may therefore view $\widetilde{u}$ simply as a map
$$\widetilde{u}: (X - B , \partial B) \longrightarrow (\widetilde{G}, e);$$
that is, we may view $\widetilde{u}$ as a map on the base with no equivariance restrictions. Since $G$ is compact and connected, it follows that $\widetilde{G}$ is 2-connected. Up to homotopy, $(X - B , \partial B)$ is a 2-dimensional CW pair, so $\widetilde{u}$ can be homotoped \emph{rel} $\partial B$ to the identity, which shows ${\mathcal{H}} \cap \G_p$ is connected. 

Now we show $\pi_0(\G_p) \cong \pi_0(\G(P))$. We may homotope any gauge transformation $u: P \rightarrow G$ so that it is constant on $\smash{\pi^{-1}(B) \subset P}$, with $B$ as above. Just as above $\smash{P \vert_{\overline{B}} \rightarrow \overline{B}}$ is the trivial bundle, so gauge transformations on $\smash{P \vert_{\overline{B}}}$ are exactly maps $\overline{B} \rightarrow G$. Since $G$ is connected, we can obviously find a homotopy $rel\; \partial B$ of $u: (\overline{B}, \partial B) \rightarrow (G, u(p))$ to a map that sends $x \in B$ to the identity. This shows that $u$ can be homotoped to an element of $\G_p$.

Finally, by Lemma \ref{44} we have $\G_0 \subseteq {\mathcal{H}}$, while the reverse inclusion follows from the conclusions of the previous two paragraphs.

	\end{proof}

			Fix $x \in X$ as well as a point $p \in P$ over $x$. It is well-known that the holonomy provides a map $\mathrm{hol}: \A_\fl (P)  \rightarrow \hom(\pi_1(X, x), G)$. This intertwines the action of $\G(P)$ on $\A_\fl(P)$ with the conjugation action of $G$ on itself in the sense that if $\gamma: (S^1, 1) \rightarrow (X, x)$ is a smooth loop, then
			$$\mathrm{hol}_{u^* \afA}(\gamma) = u(p)^{-1} 	\mathrm{hol}_{\afA}(\gamma) 	u(p)$$
			for all gauge transformations $u \in \G(P)$ and flat connections $\afA$; see \cite[Prop. 4.1]{KN} and \cite{AB}. Moreover, the holonomy descends to a topological embedding
			$$\A_\fl(P) / \G_p \hookrightarrow \hom(\pi_1(X, x), G)$$
			with image a union of connected components that are determined by the topological type of the bundle $P$. To determine this set of image components for a given bundle $P$ it is useful to consider the following variation dating back to Atiyah and Bott \cite{AB}. Let $j: G \rightarrow P$ denote the embedding $g \mapsto p \cdot g^{-1}$ (recall $G$ acts on $P$ on the right), and let $j_*$ denote the induced map on $\pi_1$. Consider the universal cover $\smash{\widetilde{G} \rightarrow G}$ and denote by $\iota: \pi_1(G) \hookrightarrow \smash{Z(\widetilde{G})}$ the natural inclusion into the center of $\smash{\widetilde{G}}$. Then there is a homeomorphism
				\begin{equation}\label{1}
			\A_\fl(P) / ({\mathcal{H}} \cap \G_p) \cong \left\{ \rho \in \hom(\left. \pi_1(P, p), \widetilde{G}) \: \right| \: \rho \circ j_* = \iota  \right\}.
			\end{equation}
			We defer a proof of (\ref{1}) until the end of this section.

			\begin{proposition}\label{connlemma}
			Assume $G$ is compact and connected. Suppose $X$ is either a closed, connected, oriented surface, or $X = H$ is a compression body. Then the space of flat connections $\A_\fl(P)$ is connected when it is non-empty.	
			\end{proposition}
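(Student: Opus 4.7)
The plan is to reduce, via the identification (\ref{1}) and Lemma \ref{2}, to the connectedness of a representation variety, and then to verify this in each case. In both cases $X$ has the homotopy type of a connected $2$-dimensional CW complex: a surface is one itself, and the compression body $H$ deformation retracts onto $\Sigma_+$ with $2$-cells attached along the attaching circles of its $2$-handles. Lemma \ref{2} therefore applies, giving that ${\mathcal{H}} \cap \G_p$ is connected and that ${\mathcal{H}} = \G_0$. Since the orbits of the connected group ${\mathcal{H}} \cap \G_p$ on $\A_\fl(P)$ are themselves connected, $\A_\fl(P)$ is connected if and only if $\A_\fl(P)/({\mathcal{H}} \cap \G_p)$ is; by (\ref{1}) this quotient is homeomorphic to
$$R(P) \defeq \{\, \rho \in \Hom(\pi_1(P, p), \widetilde{G}) \mid \rho \circ j_* = \iota \,\}.$$
It therefore suffices to show $R(P)$ is connected.

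For $X = \Sigma$ a surface of genus $g \geq 1$, one has $\pi_2(\Sigma) = 0$, so $\pi_1(P)$ is a central extension of $\pi_1(\Sigma)$ by $\pi_1(G)$. The condition $\rho \circ j_* = \iota$ identifies $R(P)$ with the fiber, over a prescribed central element $\iota(\tau) \in Z(\widetilde{G})$, of the commutator-product map $\widetilde{G}^{2g} \to \widetilde{G}$, $(A_1, B_1, \ldots, A_g, B_g) \mapsto \prod_i [A_i, B_i]$, where $\tau \in \pi_1(G)$ is the topological type of $P$. This fiber is classically known to be connected for $\widetilde{G}$ compact and simply connected; this is essentially the content of Atiyah-Bott \cite{AB}. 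The case $\Sigma = S^2$ is immediate, since $\A_\fl(P) \neq \emptyset$ then forces $P$ to be trivial, and $R(P)$ is a single point.

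For $X = H$ a compression body, using the handle decomposition from the bottom up (attaching $0$- and $1$-handles to $\Sigma_- \times I$), $H$ is homotopy equivalent to a wedge of the non-spherical components of $\Sigma_-$ with a finite bouquet of circles (or, when $\Sigma_- = \emptyset$, to a wedge of circles alone). Consequently, $\pi_1(H)$ is a free product of the fundamental groups of the non-spherical components of $\Sigma_-$ with a finitely generated free group. Via van Kampen for the total space of the bundle, $\pi_1(P)$ is accordingly an amalgamated free product over $j_*(\pi_1(G))$; imposing the central condition $\rho \circ j_* = \iota$ pins down the amalgamation, so that $R(P)$ decomposes as a product. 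Each surface factor contributes a piece connected by the surface case, and each free generator contributes a factor of $\widetilde{G}$, which is connected. A product of connected spaces is connected.

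The main anticipated obstacle is handling spherical components of $\Sigma_-$: these contribute non-trivial $\pi_2$ to $H$ and can modify the central extension $\pi_1(G) \to \pi_1(P) \to \pi_1(H)$, but the hypothesis $\A_\fl(P) \neq \emptyset$ forces $P$ to be trivial over any such component (as noted in the second corollary of the introduction), so their contribution to the product decomposition of $R(P)$ is a single point and does not affect connectedness. A secondary subtlety is rigorously deducing the product decomposition of $R(P)$ from the free-product structure of $\pi_1(H)$ in the presence of the bundle; the central pinning $\rho \circ j_* = \iota$ should make this formal.
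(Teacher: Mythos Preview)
Your approach matches the paper's: reduce via (\ref{1}) and Lemma \ref{2} to connectedness of $R(P)$, identify this with a commutator-product fiber for surfaces, and with a product of such fibers and copies of $\widetilde{G}$ for compression bodies.

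There is, however, a gap in the surface step. You invoke connectedness of the commutator-product fiber ``for $\widetilde{G}$ compact and simply connected,'' but the universal cover of a compact connected Lie group need not be compact: one has $\widetilde{G} = G_1 \times \cdots \times G_k \times \bb{R}^l$ with each $G_i$ simple, compact, and simply connected. The abelian factor contributes nothing to the commutator product, so the fiber over $\iota(\tau)$ is empty unless the $\bb{R}^l$-component of $\iota(\tau)$ vanishes; this is exactly the condition that $P/[G,G]$ be trivial over the surface, and it is absorbed by the non-emptiness hypothesis. Once that is observed, the fiber factors as $\bb{R}^{2gl}$ times the commutator fibers for the individual $G_i$, and it is for these simple compact simply connected pieces that connectedness is actually established. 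The paper makes this decomposition explicit and cites \cite{AMM}, \cite{RSW}, \cite{HL} for the simple-group fact; Atiyah--Bott \cite{AB} is the right setting but does not contain the statement in the form you need. Your treatment of the compression body and of spherical components of $\Sigma_-$ is fine and agrees with the paper.
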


			\begin{proof}
			By Lemma \ref{2} the group $\mathcal{H} \cap \G_p = \G_0 \cap \G_{p}$ is connected. Moreover, it acts freely on $\A_\fl(P)$ since this is the case with $\G_p$. We will show the space on the right-hand side of (\ref{1}) is connected. The proposition follows immediately by the homotopy exact sequence for the bundle $\A_\fl(P)  \rightarrow \A_\fl(P) / ({\mathcal{H}} \cap \G_p)$.
			
			First assume $X $ is a surface of genus $g \geq 0$. For $g =0$, the space $\A_\fl(P) /(\G_0 \cap \G_p)$ is either a single point or empty, depending on whether $P$ is trivial or not. We may therefore assume $g \geq 1$. The bundle $P \rightarrow X$ is determined up to bundle isomorphism by some $\delta \in \pi_1(G) \subset Z(\widetilde{G})$. Since $\widetilde{G}$ is simply-connected, it follows that $\widetilde{G} = G_1 \times \ldots \times G_k \times \mathbb{R}^l$ for some simple, connected, simply-connected Lie groups $G_1 , \ldots, G_k$. Write $\delta = (\delta_1, \ldots, \delta_k, r)$ according to this decomposition. 
			
			Now we compute $\pi_1(P, p)$. Let $U$ be the complement in $X$ of a point $y$, and let $V$ be a small disk around $y$. Applying the Seifert-van Kampen theorem to the sets $P\vert_U, P \vert_V \subset P$, one finds a presentation for $\pi_1(P, p)$ that consists of generators and relations coming from $\pi_1(G)$, as well as additional generators $\alpha_1, \beta_1, \dots, \alpha_g, \beta_g $ subject to the relation
			\begin{equation}\label{quantity}
			\Pi_{j=1}^g \Big[ \alpha_j, \beta_j \Big]  = \delta,
			\end{equation}		
			 as well as further relations asserting that each element of $\left\{\alpha_i, \beta_i\right\}_i$ commutes with each generator coming from $\pi_1(G)$. Alternatively, the relation (\ref{quantity}) can be viewed as arising when one compares trivializations of $P \vert_U$ and $P \vert_V$ on the overlap $U \cap V$. It follows that $\A_\fl(P) /\G_0 \cap \G_p$ can be identified with the set of tuples $\left(A_{ij}, B_{ij} \right)_{i, j}$, for $1 \leq i \leq k +1$ and $1 \leq j \leq g$, where
			 \begin{itemize}
			 \item[(i)] $A_{ij}, B_{ij} \in G_{i}$, and $\Pi_{j = 1}^g \Big[ A_{ij}, B_{ij} \Big] = \delta_i$ for $1 \leq i \leq k$;
			 \item[(ii)] $A_{kj}, B_{kj} \in \bb{R}^l$, and $\Pi_{j = 1}^g \Big[ A_{kj}, B_{kj} \Big] = r$.
			 \end{itemize}
Since $\bb{R}^l$ is abelian, the tuples $(A_{kj}, B_{kj})_j$ appearing in (ii) can only exist if $r = 0$. This shows that $\A_\fl(P)$ is empty if $r \neq 0$, so we may assume $r = 0$. (Note that $r = 0$ if and only if the torus bundle $P / \left[G, G \right]$, from the introduction, is the trivial bundle.)
			
			For $1 \leq i \leq k$, given any $\delta_i \in \widetilde{G}$ it can be shown that (a) there always exist tuples $(A_{ij}, B_{ij})_j \subset G^{2g}_i$ satisfying $\Pi_{j = 1}^g \left[A_{ij}, B_{ij} \right] = \delta_i$, and (b) the set of such $(A_{ij}, B_{ij})_j$ is always connected; see \cite{AMM}, \cite[Section 2.1]{RSW} or \cite[Fact 3]{HL}. It follows that $\A_\fl(P) /\G_0 \cap \G_p$ is a product of connected spaces and is therefore connected. This finishes the proof in the case where $X$ is a surface.

			Now suppose $X = H$ is a compression body. Then there is a homotopy equivalence $H \simeq \left( \bigvee_{i = 1}^s \Sigma_i \right) \vee ( \bigvee_{i = 1}^t S^1) $ onto a wedge sum of closed, connected, oriented surfaces $\Sigma_i$ and circles; note that the surfaces can be identified with the components of the incoming end $\Sigma_- \subset \partial H$. It follows from (\ref{1}) that $\A_\fl(P) /\G_0 \cap \G_p$ is homeomorphic to 
$$\left\{ \left. \rho \in \hom(\pi_1(P_1), \widetilde{G})  \: \right| \: \rho \circ j_* = \iota \right\} \times \ldots \times \left\{ \left. \rho \in \hom(\pi_1(P_s), \widetilde{G})  \: \right| \: \rho \circ j_* = \iota \right\} \times \left( \widetilde{G}\right)^t,$$
		where $P_i \rightarrow \Sigma_i$ is the restriction of $P$ to the surface $\Sigma_i \subset H$. By the previous paragraph this is a product of connected spaces, and so is itself connected. 
			\end{proof}
			
			\begin{remark}\label{544}
			The above proof shows that, when $H$ is a compression body, restricting to the incoming end $\Sigma_- \subset \partial H$ yields a surjective map
			$$\frac{\A_\fl(P)}{ \G_0 \cap \G_p }\longrightarrow \frac{\A_\fl(P_1)}{\G_0(P_1) \cap \G_{p_1}(P_1) }\times \ldots \times \frac{\A_\fl(P_s)}{ \G_0(P_s) \cap \G_{p_s}(P_s)}$$
			that is a (trivial) principal $\widetilde{G}^t$-bundle. Similarly, restricting to the outgoing end $\Sigma_+ \subset \partial H$ yields an injection
			$$\frac{\A_\fl(P) }{\G_0 \cap \G_p }\hookrightarrow \frac{\A_\fl(P_+)}{ \G_0(P_+) \cap \G_{p_+}(P_+)},$$
			where $P_+ \rightarrow \Sigma_+$ is the restriction of $P$. In particular, a flat connection on $P \rightarrow H$ is determined uniquely, up to $\G_0(P) \cap \G_p(P)$, by its value on the boundary component $\Sigma_+$, and hence by its value on $\partial H$. 
			\end{remark}
			
			Now we verify (\ref{1}). This can be viewed as arising from the \emph{$\widetilde{G}$-valued holonomy}, which we now describe. Let $A \in \A(P)$ be a connection. Given a smooth loop $\gamma: S^1 = \bb{R} / \bb{Z} \rightarrow P$, consider the induced loop in the base $\pi \circ \gamma: S^1 \rightarrow X$. Use this to pull $P$ back to a bundle over the circle $(\pi \circ \gamma)^*P \rightarrow S^1$. The standard ($G$-valued) holonomy determines a lift $\mathrm{hol}_{A}(\pi \circ \gamma)$ of the quotient map $\left[0, 1 \right] \hookrightarrow S^1 = \bb{R} / \bb{Z}$:
			\begin{equation} \label{standardholon}
			 \begin{diagram}
			 	& & (\pi \circ \gamma)^* P \\
			 	 & \ruDashto^{\mathrm{hol}_{A}(\pi \circ \gamma)} & \dTo \\
			  \left[ 0, 1 \right] & \rInto & S^1 \\
			\end{diagram}
			\end{equation}
			 and this lift is unique if we require that it sends $0$ to $\gamma(0) \in (\pi \circ \gamma)^* P$. On the other hand, $\gamma$ determines a trivialization of this pullback bundle
			 $$(\pi \circ \gamma)^* P \cong S^1 \times G, \indent \gamma(t) \longmapsto (t, e).$$
			 Compose the lift in (\ref{standardholon}) with this isomorphism and then with the projection to the $G$-factor in $S^1 \times G$ to get a map
			 \begin{equation}\label{hol223}
			 \mathrm{hol}_{A}(\pi \circ \gamma): \left[0, 1\right] \longrightarrow G,
			 \end{equation}
			 which we denote by the same symbol we used for the standard holonomy. Then (\ref{hol223}) sends $0$ to the identity $e \in G$ and the value at 1 recovers the standard holonomy for $A$ around $\gamma$. Viewing $\widetilde{G} \rightarrow G$ as a covering space, $\mathrm{hol}_{A}(\pi \circ \gamma)$ lifts to a unique map $\widetilde{\mathrm{hol}}_{A}(\pi \circ \gamma): \left[0, 1\right] \rightarrow \widetilde{G}$ that sends $0$ to $e$. Then we declare the \emph{$\widetilde{G}$-valued holonomy} of $A$ around $\gamma$ to be the value at 1:
			 $$\mathrm{hol}^{\widetilde{G}}_{A}(\gamma) \defeq  \widetilde{\mathrm{hol}}_{A}(\pi \circ \gamma)(1) \in \widetilde{G}.$$
			 As with the standard holonomy, one can check that this is multiplicative under concatenation of paths $\gamma$. Similarly, this is equivariant in the following sense. Suppose $u \in {\mathcal{H}}$ and so $u$ lifts to a $G$-equivariant map $\widetilde{u}: P \rightarrow \widetilde{G}$. Setting $g \defeq \widetilde{u}(p)$, we have
			$$ \mathrm{hol}^{\widetilde{G}}_{u^*A}(\gamma) = g^{-1} \mathrm{hol}^{\widetilde{G}}_{A}(\gamma)g.$$

			 Next, suppose $A$ is a flat connection. Then $\mathrm{hol}^{\widetilde{G}}_{A}(\gamma) $ depends only on the homotopy class of $\gamma$. It follows from the above observations that the $\smash{\widetilde{G}}$-valued holonomy defines a map $\smash{\A_\fl(P) \rightarrow \hom(\pi_1(P, p), \widetilde{G})}$, and this intertwines the actions of ${\mathcal{H}}$ and $\smash{\widetilde{G}}$. Moreover, from the definition of $\G_p$ we have that the $\widetilde{G}$-valued holonomy is invariant under the action of ${\mathcal{H}} \cap \G_p$. We therefore have a well-defined map $\smash{\A_\fl(P) / {\mathcal{H}} \cap \G_p \rightarrow \hom(\pi_1(P), \widetilde{G})}$. It follows from the definitions above that the image lies in the right-hand side of (\ref{1}). That this map is a homeomorphism follows from the analogous argument for the standard holonomy, together with the commutativity of the following diagram.
			  $$\begin{diagram}
			 	\A_\fl(P) / ({\mathcal{H}} \cap \G_p) & \rTo  & \left\{ \left.\rho \in \hom(\pi_1(P), \widetilde{G})  \: \right| \: \rho \circ j_* = \iota \right\}\\
			 	 \dTo & & \dTo \\
			    \A_\fl(P) / \G(P) & \rInto & \hom(\pi_1(X), G) / G
			\end{diagram}$$

\subsection{Proof of Theorem \ref{finitechernsimons2}}\label{Proof1}

Write $Y = \overline{H} \cup_{\partial H} H$, where $H$ is a compression body. Fix a collar neighborhood $\left[0, \eps \right) \times \partial H \hookrightarrow H$ for $\partial H$, and use this to define the smooth structure on $Y$. This smooth structure is independent, up to diffeomorphism, of the choice of collar neighborhood, see \cite[Theorem 1.4]{Mil}. The product structure of this collar neighborhood can be used to define a vector field $\nu$ on $Y$ that is normal to $\partial H$ and that does not vanish at $\partial H$. Moreover, we assume $\nu$ is supported near $\partial H$, and so $\nu$ lifts to an equivariant vector field on $P$ that we denote by the same symbol. 

Restriction to each of the $H$ factors in $Y = \overline{H} \cup_{\partial H} H$ determines an embedding
$$\A_\fl^1(P)  \hookrightarrow   \left\{ \left(b ,  c \right)  \in  \A_\fl^1\left(P\vert_H\right) \times \A_\fl^1\left(P\vert_H\right) \left|\begin{array}{rcl}
b \vert_{\partial H} & = &c\vert_{\partial H},\\
- \iota_\nu b \vert_{\partial H} & = &  \iota_\nu c\vert_{\partial H} 
\end{array}\right.\right\}\\$$
given by
\begin{equation}\label{15}
a  \mapsto  (a \vert_{\overline{H}}, \; a \vert_H ).
\end{equation}
A few comments about the defining conditions in the codomain are in order: (i) we are treating $\smash{\nu = \nu \vert_H}$ as a vector field on $H$, viewed as the second factor in $\overline{H} \cup_{\partial H} H$; (ii) the negative sign is due to the reversed orientation of the first factor; and (iii) restriction to the hypersurface $\partial H \subset Y$ extends to a bounded linear map $H^1(Y) \rightarrow L^2(\partial H)$ (see, e.g., \cite[Thm 6.3]{Adams}), and so these equalities should be treated as equalities in the $L^2$ sense. 

Suppose $(b, c)$ is in the codomain of (\ref{15}). These define a connection $a$ on $Y$ by setting $\smash{a \vert_{\overline{H}} = b}$ and $\smash{a \vert_H = b}$. It is straight-forward to check that if $b$ and $c$ are both smooth, then $a$ continuous and of Sobolev class $\smash{H^1}$ on $Y$. Since the smooth connections are dense in $\A^1$, it follows that (\ref{15}) is surjective, and so we may treat (\ref{15}) as an identification.

The bijection (\ref{15}) singles out a preferred subspace that we call the \emph{diagonal}
\begin{equation}\label{diagon}
\left\{\left. (b, b) \in \A^1_\fl\left(P\vert_H\right) \times \A^1_\fl\left(P\vert_H\right) \; \right| \; \iota_\nu b \vert_{\partial H} = 0 \right\} \subset \A^1_\fl(P).
\end{equation}
It is convenient to consider a slightly larger space ${\mathcal{C}} \subset \A^1_\fl(P)$ defined to be the set of flat connections that can be connected by a path to an element of the diagonal (\ref{diagon}).

\medskip
\noindent \emph{Claim: The diagonal (\ref{diagon}) is path-connected. In particular, ${\mathcal{C}}$ is also path-connected.}
\medskip

To see this, consider diagonal elements $(b_0, b_0), (b_1, b_1)$. It suffices to prove the claim under the assumption that $b_0, b_1$ are both smooth and satisfy 
\begin{equation}\label{satisfy}
\iota_\nu b_0 \vert_U = \iota_\nu b_1 \vert_U = 0
\end{equation}
 on some neighborhood $U$ of $\partial H$ (this is because the $H^1$-completion of the space of these connections recovers (\ref{diagon}) and the path-components are stable under completion). By Proposition \ref{connlemma} there is a path of flat connections $t \mapsto b_t \in \A_\fl(P\vert_H)$ connecting $b_0$ and $b_1$. We will be done if we can ensure that $\iota_\nu b_t \vert_{\partial H} = 0$ for all $t \in \left[0, 1\right]$. We will accomplish this by putting $b_t$ in a suitable ``$\nu$-temporal gauge'', as follows. Restrict attention to the bicollar neighborhood $\left(-\eps, \eps \right) \times \partial H \subset Y$ obtained by doubling the collar neighborhood from the beginning of this section. Let $s$ denote the variable in the $\left(-\eps, \eps \right)$-direction and fix a bump function $\beta$ for $U$ that is equal to 1 on $\partial H$. For each $t \in \left[0, 1\right]$, define a gauge transformation $u_t$ at $(s, h ) \in \left(-\eps, \eps \right) \times \partial H $ by the formula
$$u_t(s, h)  \defeq  \exp\left(- \intdd{0}{s} \iota_{\beta \nu(\sigma, h)} b_t(\sigma, h) \: d\sigma\right).$$
Then $u_t$ depends smoothly on all variables, and a computation shows
$$\iota_{\beta \nu} (u_t^*b_t) = 0.$$
Moreover, it follows from (\ref{satisfy}) that $u_t$ is the identity gauge transformation when $t = 0, 1$. The claim follows by extending $u_t$ to all of $Y$ using a bump function. 

\medskip

It follows from the claim that the Chern-Simons functional is constant on ${\mathcal{C}}$, since $\CS_{a_0}$ is \emph{locally} constant on its critical set $\A_\fl^1(P)$. Suppose \ref{H} holds. We will show that every flat connection in $\A^1_\fl (P)$ is gauge equivalent to one in ${\mathcal{C}}$; Theorem \ref{finitechernsimons2} will then follow immediately from Remark \ref{ghhhak}. In fact, by another density argument, it suffices to show that every \emph{smooth} flat connection is gauge equivalent to one in ${\mathcal{C}}$. So we fix $a \in \A_\fl(P)$. As in the proof of the claim, by applying a suitable gauge transformation, we may assume that $\iota_\nu a = 0$. Use (\ref{15}) to identify $a$ with a pair $(b, c) \in \A_\fl\left(P\vert_H\right) \times \A_\fl\left(P\vert_H\right)$. Then $b, c$ agree on the boundary, so by Remark \ref{544}, there is some gauge transformation $u \in \G_0(P \vert_H) \cap \G_p(P \vert_H)$ for which $u^* c = b$. Here we have chosen $p \in H$ to lie in $\Sigma_+ \subset \partial H$, and we are thinking of the $H$ that appears here as the second factor in $Y = \overline{H} \cup_{\partial H} H$. Our immediate goal is to show that $u$ restricts to the identity gauge transformation on the boundary $\partial H = \Sigma_+ \cup \Sigma_-$. Since $p \in \Sigma_+$, it follows that the restriction $u \vert_{\Sigma_+}$ lies in $\G_p(P \vert_{\Sigma_+})$, which acts freely. Since $b$ and $c$ agree on $\Sigma_+$, it must be the case that $u \vert_{\Sigma_+}$ is the identity. Turning attention to $\Sigma_-$, for each component $\Sigma' \subset \Sigma_-$, the restriction $u \vert_{\Sigma'}$ lies in the identity component of the gauge group. In particular, by \ref{H} we have $u \vert_{\partial H} = e$ is the identity. At this point we have that $u$ is a gauge transformation on $H \subset Y$ that is the identity on all of $\partial H$. Then $u$ extends over $\overline{H} \subset Y$ by the identity to define a continuous gauge transformation $\smash{u^{(1)} = (e, u)}$ on $P$. This is of Sobolev class $H^1$. We also have $\smash{(u^{(1)})^*a} \in {\mathcal{C}}$, since under (\ref{15}) the connection $\smash{(u^{(1)})^*a} $ corresponds to the pair $(b, b) = (b, u^*c)$ and we have assumed $\iota_\nu a = 0$. Finally, since $u^{(1)}$, $a$ and $\smash{(u^{(1)})^*a}$ are all $H^1$, it follows from (\ref{gaugeaction}) that $\smash{u^{(1)}}$ is $H^2$. This finishes the proof of Theorem \ref{finitechernsimons2} under \ref{H}.

\begin{remark}\label{idremark}
Continue to assume \ref{H}, and suppose $a, a'$ are flat connections. Then the construction of the previous paragraph shows that there is a gauge transformation $w \in {\mathcal{H}}(P)$ such that $w^*a$ and $a'$ lie in the same path component. If we further assume that $\CS_{a_0}(a ) = \CS_{a_0}(a')$, then it follows that $\kappa(P_w) = 0$. In many cases, if $w \in {\mathcal{H}}$ and $\kappa(P_w) = 0$, then $w$ necessarily lies in the identity component. For example, this is well-known when $G = \U(r)$ or $\SU(r)$ and $\rho_0: G \rightarrow \U(\bb{C}^r)$ is the standard representation \cite[p.79]{FU}, or if $G = \PU(r)$ and $\rho_0$ is the adjoint representation \cite{DunPSU}. In such cases, it follows that $a$ and $a'$ lie in the same component of $\A_\fl(P)$.
\end{remark}

To prove the theorem without \ref{H}, we follow a strategy of Wehrheim \cite{Wenergyidentity}. Let $n_G$ be as in the definition of $N_G$. Without \ref{H} it may not be the case that $u \in \G(P\vert_H)$ restricts to the identity on $\Sigma_-$. Write $\Sigma_- = \Sigma_1 \cup \ldots \cup \Sigma_s$ in terms of its connected components and write $P_i$ for the restriction of $P$ to $\Sigma_i \subset \partial H$. Since $G$ is compact, the stabilizer subgroup in $\G(P_i)$ of each restriction $\smash{b \vert_{\Sigma_j}}$ has only finitely many components, and so there is some integer $n \leq n_G $ for which $u^n\vert_{\Sigma_i}$ lies in the identity component of the stabilizer group for $\smash{b \vert_{\Sigma_j}}$. For simplicity we assume $\smash{u^n\vert_{\Sigma_j} = e}$ is the identity for each $j$; one can check that the following argument can be easily reduced to this case. 

View $H$ as a cobordism from $\Sigma_-$ to $\Sigma_+$ (we may assume $\Sigma_-$ is not empty, otherwise \ref{H} is satisfied), and define a manifold $\smash{Y^{(n)}}$ by gluing $H$ to itself $2n$ times:
\begin{equation}\label{htown}
\overline{H} \cup_{\Sigma_+} H \cup_{\Sigma_-} \overline{H} \cup_{\Sigma_+} \ldots \cup_{\Sigma_+} H \cup_{\Sigma_-};
\end{equation}
this is cyclic in the sense that the $H$ on the right is glued to the $\overline{H}$ on the left along the boundary component $\Sigma_-$. Define a bundle $\smash{P^{(n)} \rightarrow Y^{(n)}}$ similarly. Then $a = (b, c)$ determines a continuous flat connection on $\smash{P^{(n)}}$ by the formula
$$a^{(n)} \defeq (b, c, b, c, \ldots, b, c);$$
the notation means that the $k$th component lies in the $k$th copy of ${H}$ in (\ref{htown}). Similarly, the reference connection $a_0$ defines a reference connection $\smash{a_0^{(n)}}$ on $\smash{P^{(n)}}$, and the gauge transformation $u$ determines a continuous gauge transformation on $\smash{P^{(n)}}$ by
$$u^{(n)} \defeq (e, u, u, u^2, u^2, \ldots, u^{n-1}, u^{n-1}, u^n).$$
Let $\CS^{(n)}$ denote the Chern-Simons functional for $P^{(n)}$ defined using $a^{(n)}_0$. Then (\ref{cw3}) and the additivity of the integral over its domain give
$$\CS^{(n)}((u^{(n)})^*a^{(n)}) = \CS^{(n)} (a^{(n)})  +  \kappa( P_{u^{(n)}} ) = n\CS_{a_0}(a) + \kappa( P_{u^{(n)}} ).$$
On the other hand, the pullback of $\smash{a^{(n)}}$ by $\smash{u^{(n)}}$ is $\smash{(b, b, u^*b, u^* b, \ldots, (u^{n-1})^* b, (u^{n-1})^* b)}$, and so
$$\CS^{(n)}((u^{(n)})^*a^{(n)})  = n \CS_{a_0}(a') +  k_n, \indent k_n\defeq \frac{1}{2}n(n-1) \kappa(P_u) \in \bb{Z},$$
where $a' \in {\mathcal{C}}$ is the connection corresponding to $(b, b)$ under (\ref{15}). Combining these gives $\CS_{a_0}(a) - \CS_{a_0}(a') \in \frac{1}{n} \bb{Z} \subseteq \frac{1}{N_G} \bb{Z}$.

\subsection{The energies of instantons}\label{TheEnergyOfInstantons}

Let $P^\infty \rightarrow H^\infty$ be as in the statement of Corollary \ref{cor}, and let $g$ be the cylindrical end metric on $H^\infty$. Equip the 4-manifold $\bb{R} \times H^\infty$ with the product metric, and denote by $Q \rightarrow \bb{R} \times H^\infty$ the pullback of $P^\infty$ under the projection $\bb{R} \times H^\infty \rightarrow H^\infty$. The \emph{energy} of a connection $A \in \A(Q)$ is defined to be 
$$\frac{1}{2} \Vert F_A \Vert_{L^2(\bb{R} \times H^\infty)} = \frac{1}{2} \intd{\bb{R} \times H^\infty} \langle F_A \wedge * F_A \rangle,$$
where $*$ is the Hodge star coming from the metric. We will always assume the energy of $A$ is finite. We say that $A$ is an \emph{instanton} if $*F_A = \pm F_A$. It follows that the energy of any instanton is given, up to a sign, by 
\begin{equation}\label{ener}
 \frac{1}{2} \intd{\bb{R} \times H^\infty} \langle F_A \wedge F_A \rangle.
\end{equation}
In this section we will prove Corollary \ref{cor} by showing that (\ref{ener}) is equal to $\CS_{a_0}(a_\flat)$ for some flat connections $a_\flat, a_0$ on $Y \defeq \overline{H} \cup_{\partial H} H$. First we introduce some notation.

\medskip

Recalling the decomposition (\ref{hdecomp}), there is a projection 
\begin{equation}\label{projmap}
\bb{R} \times H^\infty \longrightarrow \bb{H}
\end{equation}
to the upper half-plane, sending $\left\{s \right\} \times H$ to $(s, 0) \in  \bb{H}$, and sending each element of $\left\{(s, t) \right\} \times \partial H$ to $(s, t)$. (This projection is continuous, but \emph{not} differentiable.) Note that for each $\tau  \in (0, \infty)$, the inverse image under (\ref{projmap}) of the semi-circle 
$$\left\{ (\tau \cos(\theta), \tau \sin(\theta))\: \vert\: \theta \in \left[0, \pi \right] \right\} \subset \bb{H}$$ 
is the closed 3-manifold 			
					$$Y_{\tau} \defeq \overline{H} \cup_{\left\{0 \right\} \times \partial H} \left( \left[0, \tau \pi \right] \times \partial H \right) \cup_{\left\{\tau \pi \right\} \times \partial H} H.$$
					In the degenerate case $\tau = 0$, we declare $Y_0$ to be the inverse image under (\ref{projmap}) of the origin; so $Y_0 = \left\{0 \right\} \times H$. Then we have
					$$\bb{R} \times H^\infty = \cup_{\tau \geq 0} Y_\tau.$$					
					Moreover, for each $\tau> 0$, there is an identification $Y_{\tau} \cong Y_1$ induced from the obvious linear map $\left[0, \tau \pi \right] \cong \left[0, \pi \right]$. This identification is continuous, but when $\tau \neq 1$ this identification is not smooth due to the directions transverse to $\left\{0, \tau \pi \right\} \times \partial H$ in $Y_\tau$. We note also that we can identify $Y_1$ with the double $Y$; however we find it convenient to work with $Y_1$ rather than $Y$ at this stage. In summary, we have defined a continuous embedding
					$$\Pi: (0, \infty) \times Y_1 \longrightarrow \bb{R} \times H^\infty$$					
					with image the complement of $Y_0$; this map is not smooth. We think of $\Pi$ as providing certain ``polar coordinates'' on $\bb{R} \times H^\infty$. 
					
					Fix a connection ${A}$. Then we can write the pullback under $\Pi$ as
					$$\Pi^*A = a(\tau) + p(\tau) \: d\tau,$$
					where $\tau$ is the coordinate on $(0, \infty)$, $a(\cdot)$ is a path of connections on $Y_1$, and $p(\cdot)$ is a path of 0-forms on $Y_1$. Fixing $\tau$, the failure of $\Pi$ to be smooth implies that the connection ${a}(\tau)$ will not be continuous on $Y_1$, unless 
					\begin{equation}\label{nuequation}
					\iota_\nu A = 0;
					\end{equation} 
					here $\nu$ is the normal vector to the hypersurface $\bb{R} \times \partial H \subset \bb{R} \times H^\infty$. However, by performing a suitable gauge transformation to $A$, we can always achieve (\ref{nuequation}). (See the previous section for a similar construction; also note that the action of the gauge group on $A$ does not change the value of (\ref{ener}).) When (\ref{nuequation}) holds it follows that the connection $a(\tau)$
				\begin{itemize}
				\item is continuous everywhere on $Y_1$, 
				\item is smooth away from the hypersurface $\left\{0, \pi \right\} \times \partial H \subset Y_1$, and 
				\item has bounded derivative near this hypersurface. 
				\end{itemize}
				In particular, ${a}(\tau)$ is of Sobolev class $H^1$ on $Y_1$.

					Now we introduce a convenient reference connection $a_0$ on $Y_1$ with which we will define $\CS_{a_0}$. This reference connection will depend on the given connection $A$; we continue to assume that (\ref{nuequation}) holds. Define $a_0$ on the first copy of $H$ in $Y_1$ by declaring it to equal $\smash{A \vert_{Y_0}}$, where we are identifying $Y_0$ with $H$ in the obvious way. Define $a_0$ on the second copy of $H$ to also equal $\smash{A \vert_{Y_0}}$. It remains to define $a_0$ on the cylinder $\left[0, \pi \right] \times \partial H$, and there is a unique way to do this if we require that $a_0$ is (i) continuous and (ii) constant in the $\left[0, \pi \right]$-direction. It follows from (\ref{nuequation}) that $a_0$ is of Sobolev class $H^1$. Moreover, 
					$$\lim_{\tau \rightarrow 0^+}  {a}(\tau) = a_0,$$ 
					where this limit is in the $H^1$-topology on $Y_1$ (this is basically just the statement that $A$ is continuous at $Y_0 \subset \bb{R} \times H^\infty$). Note that this choice of $a_0$ may not be flat. However, it turns out that $\CS_{a_0} = \CS_{a_1}$ for some flat connection $a_1$ (in fact, any flat connection in the diagonal (\ref{diagon}) will do); see Remark \ref{flatremarka}. 
					
\medskip					
					
					Now we prove Corollary \ref{cor}. At this stage the argument follows essentially as in \cite[Theorem 1.1]{Wenergyidentity}; we recall the details for convenience. Let $A$ be any finite energy connection on $\bb{R} \times H^\infty$, and assume it has been put in a gauge so that (\ref{nuequation}) holds. Use the identity $F_{\Pi^*A} = F_{{a}} + d \tau \wedge (\partial_\tau {\afa}  - d_{{\afa}} {p})$ to get 
					$$\frac{1}{2} \Pi^* \langle F_{A} \wedge F_{A} \rangle = d \tau \wedge \langle F_a \wedge  (\partial_\tau {\afa}  - d_{{\afa}} {p}) \rangle.$$ 
					Integrate both sides and use the fact that the image of $\Pi$ has full measure in $\bb{R} \times H^\infty$ to get
					\begin{equation}\label{44r}
					\begin{array}{rcl}
					 \fracd{1}{2} \intd{\bb{R} \times H^\infty} \langle F_A \wedge F_A \rangle &= &  \intdd{0}{\infty} \intd{Y_1} d \tau \wedge \langle F_{{a}} \wedge \partial_\tau {a} \rangle \\
					\medskip
					& =&  \intdd{0}{\infty} \frac{d}{d \tau} \CS_{{a}_0}({a}(\tau)) \: d\tau \\
					\medskip
					&=& \limd{\tau \rightarrow  \infty} \CS_{a_0}({a}(\tau)) - \limd{\tau \rightarrow 0^+} \CS_{a_0}({a}(\tau)),
					\end{array}
					\end{equation}
where we used the Bianchi identity to kill off the $d_{{\afa}} {p}$-term, and then used the definition of $\CS_{a_0}$. From the definition of $a_0$, we have
$$\limd{\tau \rightarrow 0^+} \CS_{a_0}({a}(\tau)) = \CS_{a_0}(a_0) = 0, $$
so it suffices to consider the limit at $\infty$. 

Notice that (\ref{44r}) shows that the $\lim_{\tau \rightarrow  \infty} \CS_{a_0}({a}(\tau))$ exists. The goal now is to show that this limit equals $\CS_{a_0}(a_\flat)$ for some flat connection $a_\flat$. Endow $Y_1 $ with the metric induced from $ds^2 + g$ via the inclusion $Y_1 \subset \bb{R} \times H^\infty$. Then it follows from the definitions that
$$\intdd{1}{\infty} \Vert F_{{\afa}(\tau)} \Vert^2_{L^2(Y_1)} \leq \Vert F_A \Vert_{L^2(\bb{R} \times H^\infty)}^2.$$
Since the energy of $A$ is finite, the integral over $\left[1, \infty \right)$ on the left converges and so there is a sequence $\tau_i \in \bb{R}$ with
$$\Vert F_{{\afa}(\tau_i)} \Vert^2_{L^2(Y_1)} \stackrel{i}{\longrightarrow} 0 \indent \textrm{and} \indent \tau_i \stackrel{i}{\longrightarrow} \infty.$$
By Uhlenbeck's weak compactness theorem \cite{U}, we can find
\begin{itemize}
\item a subsequence of the $\left\{{a}(\tau_i) \right\}$, denoted by $\left\{{a}_i \right\}$, 
\item a sequence of gauge transformations $\left\{u_i \right\}$, and 
\item a flat connection $a_\infty$, 
\end{itemize} 
for which $\left\{u_i^* {a}_i\right\}$ converges to $a_\infty$ weakly in $H^1$ and hence strongly in $L^4$. This convergence is enough to put each $u_i^*{a}_i$ in Coulomb gauge with respect to $a_\infty$ \cite[Theorem 8.1]{Wuc}, so by redefining each $u_i$ we may assume this is the case. Then $u_i^*{a}_i$ converges to $a_\infty$ strongly in $H^1$. Since $\CS_{a_0}$ is continuous in the $H^1$-topology, we have
$$\lim_{i \rightarrow \infty}  \CS_{a_0}(u_i^*{a}_i) = \CS_{a_0}(a_\infty).$$
On the other hand, 
$$\CS_{a_0}(u_i^*{a}_i) - \CS_{a_0}({a}_i) = \kappa(P_{u_i}) \in \bb{Z}$$
for all $i$. Since $\CS_{a_0}(u_i^*{a}_i)$ and $\CS_{a_0}({a}_i)$ both converge, it follows that $\kappa(P_{u_i})$ is constant for all but finitely many $i$. By passing to yet another subsequence, we may assume that $\kappa(P_{u_i})$ is constant for all $i$. Then there is some gauge transformation $u$ such that $\kappa(P_{u}) = \kappa(P_{u_i})$ for all $i$ (just take $u$ to be one of the $u_i$). This gives
$$\begin{array}{rcccl}
\fracd{1}{2} \intd{\bb{R} \times H^\infty} \langle F_A \wedge F_A \rangle & = & \limd{i\rightarrow \infty} \CS_{a_0}({a}_i) & = & \limd{i\rightarrow \infty}  \CS_{a_0}(u^*_i {a}_i) - \kappa(P_{u_i}) \\
\medskip
& =& \CS_{a_0}(a_\infty) - \kappa(P_u) & = & \CS_{a_0}((u^{-1})^*a_\infty).
\end{array}$$
So taking $a_\flat \defeq (u^{-1})^* a_\infty$ finishes the proof.

\begin{remark}		\label{flatremarka}
Here we address the fact that the reference connection $a_0$, constructed in the proof above, may not be a flat connection. We address this from two different angles. First of all, the quantity (\ref{ener}) is independent of the choice of connection $A$, provided that one restricts to connections with the same asymptotic behavior at infinity. In particular, one can always modify the connection $A$ so that its restriction to $Y_0$ is flat. This forces $a_0$ to be flat. 

Secondly, the argument of the previous paragraph suggests that the value $\CS_{a_0}(a)$ is somehow independent of $a_0$. It is interesting to see this explicitly without modifying the original connection $A$. There is an obvious $\bb{Z}_2$ action on $Y = \overline{H} \cup_{\partial H} H$ given by interchanging the two $H$-factors. Call a form or connection on $Y$ \emph{symmetric} if it is fixed by this action. For example, all elements of the diagonal (\ref{diagon}) are symmetric. The key observation here is that $a_0$ is symmetric. Then we claim that function $\CS_{a_0}$ is independent of the choice of $a_0$ from the class of symmetric connections. Indeed, suppose $a_1$ is a second connection that is symmetric. We want to show that $\CS_{a_0}(a) = \CS_{a_1}(a)$ for all connections $a$. From the definition of the Chern-Simons functional we have
$$\CS_{a_0}(a) - \CS_{a_1}(a) = -\CS_a(a_0) + \CS_a(a_1).$$
Note that the right-hand side is actually independent of $a$, since changing the connection $a$ changes $\CS_a$ by a constant. We can therefore replace $a$ with $a_0$ on the right-hand side to get 
$$\CS_{a_0}(a) - \CS_{a_1}(a)  = \CS_{a_0}(a_1) = \intd{Y}  \langle F_{a_0} \wedge v \rangle +  \frac{1}{2} \langle d_{a_0} v \wedge v \rangle  + \frac{1}{6} \langle \left[ v \wedge v \right] \wedge v \rangle,$$
where $v \defeq a_1 - a_0$. Let $cs_{a_0}(v)$ denote the integrand on the right. Now use the following facts: (i) $Y$ decomposes into two copies of $H$, (ii) the two copies of $H$ have opposite orientations, and (iii) $cs_{a_0}(v)$ is symmetric (it is made up of the symmetric $a_0, a_1$). These allow us to compute
$$\CS_{a_0}(a) - \CS_{a_1}(a) = \intd{\overline{H}} cs_{a_0}(v) + \intd{H} cs_{a_0}(v) = -\intd{{H}} cs_{a_0}(v) + \intd{H} cs_{a_0}(v)  = 0.$$
\end{remark}

\small

\end{document}